\documentclass[11pt]{article}

\usepackage[a4paper,margin=1in]{geometry}
\usepackage[utf8]{inputenc}
\usepackage{microtype}
\usepackage{amsmath,amssymb,amsthm,mathtools}
\usepackage{bm}
\usepackage{booktabs}
\usepackage{enumitem}
\usepackage[numbers,sort&compress]{natbib}
\usepackage[colorlinks=true,linkcolor=blue,citecolor=blue,urlcolor=blue]{hyperref}
\usepackage[nameinlink,noabbrev]{cleveref}

\allowdisplaybreaks
\numberwithin{equation}{section}

\newtheorem{theorem}{Theorem}[section]
\newtheorem{proposition}[theorem]{Proposition}
\newtheorem{lemma}[theorem]{Lemma}

\newtheorem{assumption}[theorem]{Assumption}
\theoremstyle{definition}

\newtheorem{example}[theorem]{Example}
\theoremstyle{remark}
\newtheorem{remark}[theorem]{Remark}
\crefname{assumption}{Assumption}{Assumptions}
\Crefname{assumption}{Assumption}{Assumptions}

\newcommand{\R}{\mathbb{R}}

\newcommand{\Z}{\mathbb{Z}}
\newcommand{\E}{\mathbb{E}}
\newcommand{\Pp}{\mathbb{P}}
\newcommand{\1}{\mathbf{1}}
\newcommand{\dd}{\,\mathrm{d}}
\newcommand{\floor}[1]{\lfloor #1\rfloor}

\newcommand{\SV}{\mathrm{SV}}
\newcommand{\fdd}{\mathrm{f.d.d.}}
\newcommand{\RightarrowM}{\Rightarrow_{M_1}}
\newcommand{\RightarrowJ}{\Rightarrow_{J_1}}
\newcommand{\e}{\mathrm{e}}

\title{\textbf{Limit Theorems for Tempered Linear Processes with Innovations in the Domain of Attraction of a Stable Law}}
	\author{
	Qian Yu\thanks{Corresponding author, Email: qyumath@163.com}\\
		{\footnotesize\itshape School of Mathematics, Nanjing University of Aeronautics and Astronautics}\\
		{\footnotesize\itshape Nanjing, Jiangsu 211106, P.R. China}
	}
\date{\today}

\begin{document}
\maketitle

\begin{abstract}
We study the partial-sum behavior of  tempered linear processes
\[
X_{N,n}=\sum_{j=1}^{\infty}e^{-\lambda_Nj}\frac{\ell(j)}{j}\varepsilon_{n-j},
\qquad \lambda_N\downarrow0,
\]
where $\ell$ is slowly varying and the innovations belong to the domain of attraction of an $\alpha$-stable law with $1<\alpha\leq2$. The filter $j^{-1}\ell(j)$ represents the logarithmic boundary between summable and power-law long-memory coefficients. Let
\[
Q_N=\sum_{j=1}^{N}e^{-\lambda_Nj}\frac{\ell(j)}{j},
\qquad
L(N)=\sum_{j=1}^{N}\frac{\ell(j)}{j}.
\]
We prove that the partial-sum process, normalized by $B_NQ_N$, converges to the stable L\'{e}vy motion associated with the innovations. Moreover,
\[
Q_N\sim L(N)\quad\text{if }N\lambda_N=O(1),
\qquad
Q_N\sim L(1/\lambda_N)\quad\text{if }N\lambda_N\to\infty.
\]
Then weak, moderate, and strong tempering have the same first-order L\'{e}vy limit but different normalizations. 
In the weakly and moderately tempered regimes, the second-order remainder, normalized by $B_N\ell(N)$, converges in finite-dimensional
distributions to a logarithmically tempered stable process; in the Gaussian finite-moment case the convergence is functional.
These results extend the untempered logarithmic-boundary theorem and complement existing invariance principles for tempered linear processes.
\end{abstract}

\noindent\textbf{Keywords:}
Tempered linear process; Semi-long memory; Stable domain of attraction;
 Skorokhod $M_1$ topology; Logarithmic stable motion.

\noindent\textbf{MSC 2020:}
60F17; 60G10; 60G52; 62M10.

\section{Introduction}
\label{sec:introduction}

Linear processes are among the basic models in probability and time
series analysis.  Given an i.i.d.\ innovation sequence
$\{\varepsilon_k:k\in\Z\}$ and a deterministic filter $\{a_j\}$, one
considers
\[
 X_n=\sum_{j\geq0}a_j\varepsilon_{n-j}.
\]
The asymptotic behavior of $\sum_{n\le Nt}X_n$ is governed jointly by
the tail of the innovations and by the accumulation of the filter.
For absolutely summable coefficients, stable L\'evy limits go back to
\citet{Astrauskas1983} and \citet{DavisResnick1985}; functional versions
and the role of the Skorokhod topologies were developed in
\citet{AvramTaqqu1992}, \citet{BalanJakubowskiLouhichi2016}, and the
subsequent point-process literature.  In particular, several adjacent
jumps produced by one large innovation cannot in general be matched in
the $J_1$ topology, whereas a same-sign cluster can be compressed to
one jump in the $M_1$ topology.

If $a_j$ decreases like $j^{d-1}$ with $d>0$, the cumulative filter has
order $N^d$ and the long-memory effect survives in the limit, usually
in the form of a linear fractional stable motion.  At the opposite
end, a summable filter produces a stable L\'evy motion.  The
coefficient
\[
 a_j=\frac{\ell(j)}{j},\qquad \ell\in\SV_\infty,
\]
is the critical logarithmic boundary. $\SV_\infty$ represents the set of slowly varying functions at infinity.  The cumulative sum
\[
 L(x):=\sum_{1\le j\le x}\frac{\ell(j)}{j}
\]
is slowly varying and can diverge even though
$\sum_j |a_j|^\alpha<\infty$ for every $\alpha>1$.  \citet{Xu2025}
proved, under a general stable domain-of-attraction assumption, that
the untempered partial sums normalized by $B_NL(N)$ converge in finite-dimensional distributions to a stable L\'evy motion.  A striking
feature of that result is that the critical memory changes the scale
but not the independent-increment structure of the limit.

Tempering provides a natural way to interpolate between power-law
memory at observable scales and exponential decay at large scales.
Tempered fractional Brownian and stable motions were introduced and
studied in \citet{MeerschaertSabzikar2013,
MeerschaertSabzikar2016}; see also
\citet{SabzikarSurgailis2018b}.  For a power-law filter
$b_d(j)\sim c_dj^{d-1}/\Gamma(d)$, \citet{SabzikarSurgailis2018a}
considered
\[
 X_{d,\lambda_N}(n)
 =\sum_{j=0}^{\infty}\e^{-\lambda_Nj}b_d(j)\varepsilon_{n-j}
\]
and established different invariance principles according as
$N\lambda_N\to0$ (Weak tempering), $N\lambda_N\to\lambda_*\in(0,\infty)$ (moderate tempering), or
$N\lambda_N\to\infty$ (strong tempering).  The moderate regime leads to a tempered
fractional stable motion of the second kind.  Their boundary case
$d=0$, however, is defined separately by the short-memory condition
$\sum_j|b_0(j)|<\infty$.  It therefore does not cover the non-summable
critical filter $\ell(j)/j$.

The purpose of this paper is to fill that boundary gap.  The critical
case behaves differently from a direct extrapolation of the
$d\ne0$ theory.  At first order, the tempering parameter is absorbed by
the slowly varying normalization, and even moderate tempering produces
the same L\'evy limit as no tempering.  The tempering parameter becomes
visible only at the smaller scale $B_N\ell(N)$.  This yields a
logarithmically tempered second-order process.

The terminology ``semi-long memory'' is literal in the finite-variance
submodel.  For example, if $\ell\equiv1$ and
$\operatorname{Var}(\varepsilon_0)=\sigma_\varepsilon^2<\infty$, then
for fixed $\lambda>0$,
\[
 \gamma_\lambda(k)
 =\sigma_\varepsilon^2\e^{-\lambda k}
   \sum_{j=1}^{\infty}
   \frac{\e^{-2\lambda j}}{j(j+k)}.
\]

As $\lambda\downarrow0$ and $k\to\infty$,
\[
 \gamma_\lambda(k)
 \sim
 \sigma_\varepsilon^2\frac{\log k}{k},
 \qquad k\lambda\to0,
\]
whereas
\[
 \gamma_\lambda(k)
 \sim
 \sigma_\varepsilon^2
 e^{-\lambda k}\frac{\log(1/\lambda)}{k},
 \qquad k\lambda\to\infty.
\]
Thus the process looks
long-memory over a growing intermediate range but eventually has an
exponential cutoff.  In the infinite-variance setting, covariance must
be replaced by codifference, covariation, or a distributional
partial-sum notion of memory; see
\citet{SamorodnitskyTaqqu1994} and
\citet{GiraitisKoulSurgailis2012}.

We use a formulation that includes both the non-Gaussian stable and the
Gaussian domain of attraction.

\begin{assumption}[Stable domain of attraction]
\label{ass:innovations}
The random variables $\{\varepsilon_j:j\in\Z\}$ are i.i.d.\
and $\E\varepsilon_0=0$.

\begin{enumerate}[label=(\roman*)]
\item If $1<\alpha<2$, there are $p_+,p_-\ge0$,
$p_++p_-=1$, and a slowly varying function $h$ such that
\[
 \Pp(\varepsilon_0>x)\sim p_+x^{-\alpha}h(x),\qquad
 \Pp(\varepsilon_0<-x)\sim p_-x^{-\alpha}h(x).
\]
Choose $B_N$ so that
\[
 N B_N^{-\alpha}h(B_N)\longrightarrow1, ~~~~\text{as}~~N\to \infty.
\]
\item If $\alpha=2$, the truncated variance
\[
 V(x)=\E\!\left[\varepsilon_0^2
             \1_{\{|\varepsilon_0|\le x\}}\right]
\]
is slowly varying, and $B_N$ is chosen so that
\[
 N B_N^{-2}V(B_N)\longrightarrow1, ~~~~\text{as}~~N\to \infty.
\]
\end{enumerate}
\end{assumption}

Under \cref{ass:innovations},
\begin{equation}
 U_N(t):=\frac1{B_N}\sum_{j=1}^{\floor{Nt}}\varepsilon_j
 \ \RightarrowJ\ Z_\alpha(t)
 \quad\text{in }D[0,T],
 \label{eq:iid-fclt}
\end{equation}
as $N$ tends to infinity, where $Z_\alpha$ is the stable L\'evy motion determined by
$(p_+,p_-)$ when $\alpha<2$, and is a Brownian motion when
$\alpha=2$.  This normalization fixes the scale of the limiting stable law
without introducing an explicit scale constant. It is
equivalent to the normalization
$N^{1/\alpha}H_\alpha(N)^{1/\alpha}$ used by
\citet{Xu2025}, up to asymptotic equivalence.

\begin{assumption}[Critical filter]
\label{ass:filter}
The function $\ell:[1,\infty)\to(0,\infty)$ is measurable, locally
bounded, and slowly varying at infinity.  Set
\[
 a_j=\frac{\ell(j)}{j},\qquad
 L(x)=\sum_{1\le j\le x}a_j.
\]
We assume the genuinely critical, non-summable condition
\begin{equation}
 L(x)\longrightarrow\infty, ~~~~\text{as} ~~x\to\infty.
 \label{eq:Ldiv}
\end{equation}
\end{assumption}

No monotonicity of $\ell$ is required.  Positivity is important for
the $M_1$ functional theorem because it forces the jumps in a cluster
generated by one innovation to have the same sign.

\begin{assumption}[Vanishing tempering]
\label{ass:tempering}
The numbers $\lambda_N$ are positive, as $N\to\infty$, $\lambda_N\to0$, and
\[
 N\lambda_N\longrightarrow\lambda_*
 \in[0,\infty].
 \]
\end{assumption}

For each $N$, define the stationary tempered linear process
\[
 X_{N,n}
 :=\sum_{j=1}^{\infty}b_{N,j}\varepsilon_{n-j},
 \qquad
 b_{N,j}:=\e^{-\lambda_Nj}a_j.
 \]
Since $\lambda_N>0$, the filter is absolutely summable for each fixed
$N$ and the series converges in $L^1$ and almost surely.  Define
\[
 Q_N(x):=\sum_{j=1}^{\floor{Nx}}b_{N,j},
 \qquad Q_N:=Q_N(1),
 \]
and the partial-sum process
\begin{equation}
 S_N(t):=\sum_{n=1}^{\floor{Nt}}X_{N,n},
 \qquad t\ge0.
 \label{eq:SN}
\end{equation}

The following result gives the exact normalization in all three
tempering regimes.

\begin{theorem}[Tempered critical scale]
\label{thm:scale}
Under \cref{ass:filter,ass:tempering}, for every $c>0$,
\[
 \frac{Q_N(c)}{Q_N}\longrightarrow1, ~~~~\text{as}~~N\to \infty.
 \]
Moreover,
\[
 Q_N\sim
 \begin{cases}
 L(N),&0\le\lambda_*<\infty,\\[1mm]
 L(1/\lambda_N),&\lambda_*=\infty.
 \end{cases}
 \]
Equivalently,
\[
 Q_N\sim L\!\left(N\wedge\lambda_N^{-1}\right).
\]
\end{theorem}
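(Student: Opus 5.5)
The plan rests on two facts about the critical filter: $L$ is slowly varying, and $\ell(x)/L(x)\to0$ as $x\to\infty$. The second is the standard property of de Haan type functions $L(x)=\int_1^x \ell(u)u^{-1}\,du$ with $L\to\infty$ by \eqref{eq:Ldiv}. Sums and integrals agree up to $O(\ell(x))$ plus a bounded term by the uniform convergence theorem for slowly varying functions, and local boundedness handles small $j$. Consequently, for fixed $0<a<b$ we have $L(bx)-L(ax)\sim \ell(x)\log(b/a)=o(L(x))$. More generally $L(Mx)/L(x)\to1$, and also $L(x)/L(x/M)\to1$, for fixed $M$.

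Step 1 is the regime $0\le\lambda_*<\infty$. Here $\lambda_N\floor{Nc}\le \lambda_N N c$ is bounded, so $e^{-\lambda_N j}\ge e^{-Kc}$ for $j\le Nc$. We split
\[
Q_N(c)=\sum_{j\le N\delta}e^{-\lambda_Nj}a_j+\sum_{N\delta<j\le Nc}e^{-\lambda_Nj}a_j .
\]
In the first sum $e^{-\lambda_Nj}\in[e^{-\lambda_N N\delta},1]$, and $\lambda_N N\delta\le K\delta$. The second sum is at most $L(Nc)-L(N\delta)=O(\ell(N)\log(c/\delta))=o(L(N))$. Hence
\[
e^{-K\delta}\le\liminf Q_N(c)/L(N)\le\limsup Q_N(c)/L(N)\le1,
\]
and letting $\delta\downarrow0$ gives $Q_N(c)\sim L(N)$ for every $c>0$. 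The ratio statement follows from the case $c=1$.

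Step 2 is the regime $\lambda_*=\infty$. Put $m_N=1/\lambda_N$, so that $m_N\to\infty$ and $m_N=o(N)$. We split $Q_N(c)$ at $\delta m_N$ and at $Mm_N$, where $Mm_N\le Nc$ for large $N$.
\begin{enumerate}[label=(\alph*)]
\item For $j\le\delta m_N$ the weight lies in $[e^{-\delta},1]$, so this piece lies between $e^{-\delta}L(\delta m_N)$ and $L(\delta m_N)$, and $L(\delta m_N)\sim L(m_N)$.
\item The middle piece is at most $L(Mm_N)-L(\delta m_N)=o(L(m_N))$.
\item For the tail $j>Mm_N$ we need the bound $\sum_{j>Mm}e^{-j/m}\ell(j)/j=o(L(m))$. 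Potter's bound gives $\ell(j)\le C\ell(m)(j/m)^{\eta}$ for $j\ge m$ and small $\eta$. Comparing with an integral, the tail is then at most
\[
C\ell(m)\int_M^\infty e^{-u}u^{\eta-1}\,du=O(\ell(m))=o(L(m)).
\]
\end{enumerate}
Letting $\delta\downarrow0$ gives $Q_N(c)\sim L(m_N)$ uniformly in $c>0$, which yields both claims in this regime.

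Step 3 covers the equivalent form $Q_N\sim L(N\wedge\lambda_N^{-1})$. When $\lambda_*\in(0,\infty)$, $N$ and $1/\lambda_N$ are comparable, so $L(N)\sim L(1/\lambda_N)$ by slow variation. When $\lambda_*=0$ the minimum equals $N$ eventually, and when $\lambda_*=\infty$ it equals $1/\lambda_N$.

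The main obstacle is the basic lemma $\ell(x)=o(L(x))$ for the discrete sum $L$. This holds without monotonicity, using the integral representation for $\ell$ and the uniform convergence theorem on $[x,2x]$. I would prove it first, by showing $L(2x)-L(x)\sim\ell(x)\log2$ and noting that $L(x)\ge\sum_k[L(2^{-k}x)-L(2^{-k-1}x)]$ picks up many dyadic blocks of size comparable to $\ell(x)$.
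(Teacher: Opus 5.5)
Your proof is correct. In the regime $0\le\lambda_*<\infty$ it is essentially the paper's argument: split at $\delta N$, where the tempering weight is at least $e^{-K\delta}$, and control the rest by $L(Nc)-L(\delta N)=O(\ell(N))=o(L(N))$. The only difference is that you treat $Q_N(c)$ directly for each $c$, whereas the paper first proves $Q_N\sim L(N)$ and then uses $|A_N(c)-A_N(1)|=O(\ell(N))$.

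In the strong regime $\lambda_*=\infty$ your route is genuinely different.
\begin{itemize}
\item The paper writes the full series as the Laplace--Stieltjes transform $\int e^{-\lambda_N x}\,\dd L(x)$. It then invokes the Abelian (Karamata) theorem, which needs only that $L$ is nondecreasing and slowly varying, to get $\sim L(1/\lambda_N)$. A second Potter/exponential-tail estimate, only sketched, then discards the terms $j>Nc$.
\item Your split at $\delta/\lambda_N$ and $M/\lambda_N$ proves that Abelian statement by hand for this kernel. It handles the truncated sum $Q_N(c)$ in one pass, because everything beyond $M/\lambda_N$ is dominated by the full tail.
\item Your route needs the extra fact $\ell=o(L)$. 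In exchange it gives a sharper quantitative statement: the mass beyond $M/\lambda_N$ is at most $C\,\ell(1/\lambda_N)\int_{M/2}^\infty e^{-u}u^{\eta-1}\dd u$, one order below the main term. The paper has to re-derive exactly this kind of bound in Case~2 of the compressed-cluster lemma.
\end{itemize}
Your dyadic argument for $\ell=o(L)$ is sound and needs no monotonicity of $\ell$. It is $L(x)\ge L(x)-L(2^{-K}x)\sim K\log 2\,\ell(x)$ for every fixed $K$, which is the same geometric-interval argument the paper alludes to.

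Two small corrections.
\begin{itemize}
\item Your aside that $\sum_{j\le x}\ell(j)/j$ and $\int_1^x\ell(u)u^{-1}\dd u$ differ by $O(\ell(x))$ plus a bounded term is false for merely measurable $\ell$. Changing $\ell$ only on the integers, e.g.\ $\ell(u)=1+\1_{\{u\in\Z\}}/\log(e+\log u)$, leaves the integral equal to $\log x$ but adds roughly $\log x/\log\log x$ to the sum. You do not need this claim: $L(cx)-L(x)\sim\ell(x)\log c$ for the discrete sum follows directly from the uniform convergence theorem, as in the paper.
\item $Q_N(c)\sim L(1/\lambda_N)$ holds for each fixed $c>0$, and indeed uniformly in $c\ge c_0>0$. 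It does not hold uniformly in all $c>0$, since $Q_N(c)=0$ when $Nc<1$. Only the fixed-$c$ statement is needed.
\end{itemize}
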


\begin{theorem}[First-order functional limit]
\label{thm:firstorder}
Let \cref{ass:innovations,ass:filter,ass:tempering} hold and let
$T<\infty$.  Then
\begin{equation}
 \left\{\frac{S_N(t)}{B_NQ_N}:0\le t\le T\right\}
 \xrightarrow{\fdd}
 \{Z_\alpha(t):0\le t\le T\},
 \label{eq:firstfdd}
\end{equation}
as $N$ goes to infinity.
If $1<\alpha<2$, then the convergence is functional:
\begin{equation}
 \frac{S_N}{B_NQ_N}\RightarrowM Z_\alpha
 \quad\text{in }D[0,T].
 \label{eq:firstM1}
\end{equation}
If $\alpha=2$, $\E\varepsilon_0^2<\infty$, and
$\E|\varepsilon_0|^{2+\delta}<\infty$ for some $\delta>0$, then
\begin{equation}
 \frac{S_N}{B_NQ_N}\RightarrowJ Z_2
 \quad\text{in }D[0,T].
 \label{eq:firstJ1}
\end{equation}
Because the limit in \eqref{eq:firstJ1} is continuous, this is
equivalent to locally uniform weak convergence after polygonal
interpolation.
\end{theorem}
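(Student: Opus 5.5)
We prove \cref{thm:firstorder} by rewriting $S_N$ as a weighted sum of innovations and comparing it with $Q_N U_N$. Exchanging the order of summation gives
\[
 S_N(t)=\sum_{k\in\Z}c_{N,k}(t)\,\varepsilon_k,\qquad
 c_{N,k}(t)=\sum_{n=1\vee(k+1)}^{\floor{Nt}} b_{N,n-k}.
\]
For $1\le k\le \floor{Nt}$ the weight is $c_{N,k}(t)=\sum_{j=1}^{\floor{Nt}-k}b_{N,j}$, which is at most $Q_N(t)$. We split
\[
 S_N(t)=Q_N\sum_{k=1}^{\floor{Nt}}\varepsilon_k
 +\sum_{k=1}^{\floor{Nt}}\bigl(c_{N,k}(t)-Q_N\bigr)\varepsilon_k
 +\sum_{k\le0}c_{N,k}(t)\varepsilon_k
 =:Q_NB_NU_N(t)+R^{(1)}_N(t)+R^{(2)}_N(t).
\]
By \eqref{eq:iid-fclt}, the main term divided by $B_NQ_N$ converges to $Z_\alpha$. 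Finite-dimensional convergence \eqref{eq:firstfdd} then follows once we show $R^{(i)}_N(t)/(B_NQ_N)\to0$ in probability for each fixed $t$.

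To control the remainders we use a moment bound for weighted sums of i.i.d.\ variables in the domain of attraction. For $1<\alpha<2$, take $\beta\in(1,\alpha)$ close to $\alpha$ (for $\alpha=2$, take $\beta=2$ after truncation at level $B_N$). The von Bahr--Esseen inequality together with Potter bounds on $h$ give
\[
 \E\Bigl|\sum_k w_k\varepsilon_k\Bigr|^{\beta}\lesssim \sum_k |w_k|^{\beta}
\]
uniformly up to slowly varying corrections. The cleaner route is a truncation argument: the total weight of innovations contributing with normalized coefficient $|w_k|/(B_N Q_N)$ must be $o(N)$ in the $\alpha$-power sense, i.e.\ $\sum_k (|w_k|/Q_N)^\alpha h(\cdot)=o(N)$.

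For $R^{(1)}_N$ the normalized deficit is $|c_{N,k}(t)-Q_N|/Q_N\le 1$. By \cref{thm:scale}, $Q_N(\delta)/Q_N\to1$ for every $\delta>0$, so the deficit is $o(1)$ uniformly for $k\le \floor{Nt}-\delta N$. On the remaining $\delta N$ indices it is bounded by $2$ (this also absorbs the difference between $Q_N(t)$ and $Q_N$). The resulting $\alpha$-sum is therefore $o(N)+O(\delta N)$, and letting $\delta\downarrow0$ finishes this remainder.

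For $R^{(2)}_N$, write $m=-k\ge0$, so that
\[
 c_{N,k}(t)=Q_N(t+m/N)-Q_N(m/N)\le Q_N(t+m/N)-Q_N(m/N).
\]
We split at $m\le MN$ and $m>MN$. On $m\le MN$, \cref{thm:scale} again gives a normalized weight of $o(1)$, except for $m\le\delta N$, where it is at most $1$; the latter set contributes $O(\delta N)$. On $m>MN$ we bound $c_{N,k}\le \sum_{j>m}b_{N,j}\wedge \floor{Nt}\,a_{m}$. Then $\sum_{m>MN}(Na_m/Q_N)^\alpha\lesssim N^\alpha (MN)^{1-\alpha}\ell^\alpha/Q_N^\alpha$. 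This is $O(M^{1-\alpha}N)$ up to the factor $\ell(N)^\alpha/L(N)^\alpha\to0$, since $L(x)/\ell(x)\to\infty$ for a divergent $L$. Here $\alpha>1$ is essential. This handles $R^{(2)}_N$.

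For the functional statement \eqref{eq:firstM1}, the key observation is that the coefficients $b_{N,j}$ are positive. Each innovation therefore produces a monotone same-sign ramp rather than a jump. We would follow the Avram--Taqqu / Basrak--Krizmanić approach for linear processes with nonnegative coefficients: first truncate the filter to $j\le \delta N$, then use point-process convergence to show that the truncated partial sums, divided by $B_NQ_N(\delta)$, converge in $M_1$. Each large $\varepsilon_k$ creates an increment of total size $\varepsilon_kQ_N(\delta)$ spread monotonically over a time interval of length $\delta$, and the $M_1$ topology tolerates such ramps as $\delta\to0$. The tail $j>\delta N$ is controlled uniformly in $t$ through the bound $\sup_t|\cdot|\le\sum_k(\sup_t|w_k(t)|)|\varepsilon_k|$ together with the previous estimate applied to $\sup_t$ weights, which carry mass $(Q_N-Q_N(\delta))/Q_N\to0$.

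In the case $\alpha=2$ with a $(2+\delta)$-th moment, tightness in $J_1$ follows from the moment bound
\[
 \E|S_N(t)-S_N(s)|^{2+\delta}\lesssim \bigl(B_NQ_N\bigr)^{2+\delta}\,\bigl|(\floor{Nt}-\floor{Ns})/N\bigr|^{1+\delta/2},
\]
obtained from Rosenthal's inequality, combined with Kolmogorov's criterion on the grid and a polygonal comparison. The main obstacle is the $M_1$ tightness: one must control the tempered, non-monotone-in-$N$ tail uniformly in $t$ and show that the oscillation functional $w_{M_1}$ is negligible. We expect to handle this by reducing to finitely many large innovations plus a small-jump part, whose sup-norm is bounded via a Doob-type maximal inequality applied to the martingale $\sum_{k\le n}\varepsilon_k$ composed with monotone weights (Abel summation).
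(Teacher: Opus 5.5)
\paragraph{Main gap: the $M_1$ part.}
Your finite-dimensional argument is in substance the paper's: showing that the remainder weights are small in $\frac1N\sum_k|\cdot|^p$ is equivalent to Proposition~\ref{prop:firstkernel} followed by Lemma~\ref{lem:weightedarray}. The $M_1$ part, however, has a genuine gap. It occurs where you control the filter tail $j>\delta N$ uniformly in $t$ through $B_N^{-1}\sum_k\sup_t|w_k(t)|\,|\varepsilon_k|$.

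Putting absolute values on the innovations discards all cancellation. The centered moment bound cannot be applied to $\sum_k v_k|\varepsilon_k|$, and $|\sum_k v_k\varepsilon_k|$ does not dominate the supremum either. At scale $B_NQ_N$ this majorant actually diverges. In the weak and moderate regimes, take the $\asymp N$ in-sample indices $k\le N(T-2\delta)$. There the tail weight is $\{A_N(T-k/N)-A_N(\delta)\}/Q_N\asymp\ell(N)/L(N)$ by \eqref{eq:deHaan}, since $\e^{-\lambda_Nj}$ is bounded below for $j\le NT$. By the law of large numbers the majorant is then of order $(N/B_N)\,(\ell(N)/L(N))\,\E|\varepsilon_0|\to\infty$. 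This is because $N/B_N$ is regularly varying of index $1-1/\alpha>0$, while $\ell(N)/L(N)$ decays only slowly. Tail weights of mass $o(1)$ are therefore not enough; you need maximal bounds that keep the centering.

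The paper obtains these as follows. It truncates the innovations at $\eta B_N$ and splits the filter into the ranges $r\le m_N$, $m_N<r\le 2NT$, and $r>2NT$.
\begin{itemize}
\item On the first two ranges the contribution is an average of shifted partial-sum processes, $\sum_r(b_{N,r}/Q_N)\{U_{N,\eta}(t-r/N)-U_{N,\eta}(-r/N)\}$. Its supremum is at most twice the filter mass times $\max_{|k|\le 2NT}|B_N^{-1}\sum_{j\le k}\varepsilon^{(\le\eta)}_{N,j}|$. This is what your closing remark about Doob's inequality has to become.
\item On the remote range any bound linear in the filter mass fails, because $\sum_{r>2NT}b_{N,r}/Q_N$ is unbounded in the weak regime. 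For $\ell\equiv1$ and $\lambda_N=\e^{-N}$ it is of order $N/\log N$. The paper instead uses the associated-sums maximal inequality \eqref{eq:3.11}, valid because $b_{N,r}\ge0$, with $\alpha<p<2$. Combined with Lemma~\ref{lem:remotepast}, this only requires $\frac1N\sum_m(D_N(m,T)/Q_N)^p\to0$.
\end{itemize}

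The large-innovation step also cannot simply be imported from Avram--Taqqu or Basrak--Krizmani\'c, whose results concern a fixed summable filter. You need that each $N$-dependent cluster collapses to a single jump, because $A_N(x)/Q_N\to1$ for every fixed $x>0$ (Lemma~\ref{lem:compressedcluster}). Contrary to your picture, the $\delta N$-truncated response therefore does not spread over a time length $\delta$. On top of that you need distinct limit jump times, $M_1$-continuity of addition, the compensation $\frac N{B_N}\E[\varepsilon_0\1_{\{|\varepsilon_0|>\eta B_N\}}]$, and removal of the pre-zero large innovations.

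\paragraph{Smaller points.}
First, the von Bahr--Esseen bound with a fixed $\beta<\alpha$ is not valid ``up to slowly varying corrections''. It gives $\E|B_N^{-1}\sum_kw_k\varepsilon_k|^\beta\le C\,N^{1-\beta/\alpha+o(1)}\cdot\frac1N\sum_k|w_k|^\beta$, which is a polynomial loss. Truncating at $B_N$ fixes this: use the second moment below the level and a $\beta$th moment with $1<\beta<\alpha$ above it. This gives the clean bound $C\frac1N\sum_k(|w_k|^2+|w_k|^\beta)$. Alternatively, apply Lemma~\ref{lem:weightedarray} with limit $w\equiv0$.

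Second, your far-past estimate for $m>MN$ uses $Q_N\sim L(N)$ and drops the tempering factor. This fails under strong tempering, where $Q_N\sim L(1/\lambda_N)$ and $\ell(N)/Q_N$ can diverge. For example, with $\ell(x)=1+\log x$ and $\lambda_N=1/\log N$ one has $\ell(N)\sim\log N$ while $Q_N\sim\frac12(\log\log N)^2$. You must keep $\e^{-\lambda_N m}$, as in the second half of the proof of Lemma~\ref{lem:remotepast}.

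Third, your Rosenthal--Kolmogorov route for $\alpha=2$ is a legitimate alternative to the paper's Donsker-plus-uniform-approximation argument. It does, however, require proving $\sum_kc_k^2\le C\,qQ_N^2$ and $\max_k|c_k|\le CQ_N$ uniformly in $q\le NT$. That proof must rest on $\sum_ja_j^2<\infty$ and Potter bounds. It cannot use the total mass $\sum_jb_{N,j}$, which in the weak regime can be much larger than $Q_N$.
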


\begin{remark}[Why $M_1$ and not $J_1$]
\label{rem:J1failure}
Under \eqref{eq:Ldiv},
\[
 \max_{j\ge1}\frac{b_{N,j}}{Q_N}\longrightarrow0,~~~~\text{as}~~N\to \infty.
\]
Consequently, one large innovation is transformed into a staircase of
many individually negligible jumps whose total height is asymptotically
the innovation itself.  Such a staircase can converge to a single jump
in $M_1$, but not in $J_1$.  For a non-Gaussian stable limit,
$J_1$ convergence cannot in general be expected, compare \citet{AvramTaqqu1992}.
\end{remark}

\begin{remark}[Weak, moderate, and strong tempering]
\label{rem:three-regimes}
The regimes have the following interpretation.
\begin{enumerate}[label=(\roman*)]
\item If $N\lambda_N\to0$, the cutoff length
$\lambda_N^{-1}$ is much larger than the observation horizon.  The
tempering is asymptotically invisible and $Q_N\sim L(N)$.
\item If $N\lambda_N\to\lambda_*\in(0,\infty)$, the cutoff is of order
$N$.  For a noncritical power-law filter this regime produces a
nontrivial tempered fractional limit.  At the critical index, however,
the mass in every fixed proportional interval $[cN,N]$ is only of
order $\ell(N)=o(L(N))$.  Hence $Q_N\sim L(N)$ and the first-order
limit is again $Z_\alpha$.
\item If $N\lambda_N\to\infty$, the cutoff length is $o(N)$.
The process is distributionally short-memory at the sample scale and
$Q_N\sim L(1/\lambda_N)$.
\end{enumerate}
\end{remark}

\begin{example}[The logarithmic filter]
\label{ex:logfilter}
If $\ell\equiv1$, then $L(N)\sim\log N$.  For
$\lambda_N=N^{-\gamma}$,
\[
 Q_N\sim
 \begin{cases}
 \log N,&\gamma\ge1,\\
 \gamma\log N,&0<\gamma<1.
 \end{cases}
\]
For moderate tempering $\lambda_N=\lambda_*/N$,
\[
 Q_N
 =\log N+\gamma_{\!E}
   +\int_0^1\frac{\e^{-\lambda_*u}-1}{u}\dd u+o(1),
\]
where $\gamma_{\!E}$ is Euler's constant.  Thus $\lambda_*$ first
appears one order below the leading $\log N$ normalization.
\end{example}

The preceding example suggests subtracting the leading innovation
partial sum and magnifying the remainder by $B_N\ell(N)$.  This
requires weak or moderate tempering; throughout the second-order
result we assume
\begin{equation}
 N\lambda_N\longrightarrow\lambda\in[0,\infty).
 \label{eq:secondregime}
\end{equation}
For $x>0$, set
\[
 G_\lambda(x)
 :=\log x+\int_0^x\frac{\e^{-\lambda u}-1}{u}\dd u,
\]
and, for $t\ge0$, $y\in\R$, define
\begin{equation}
 K_\lambda(t,y)
 :=G_\lambda(t-y)\1_{\{y<t\}}
   -G_\lambda(-y)\1_{\{y<0\}}.
 \label{eq:Kkernel}
\end{equation}
The apparent logarithmic singularities are locally integrable.  In
fact,
\[
 K_\lambda(t,\cdot)\in L^r(\R)
 \qquad\text{for every }r>1.
\]
Let $M_\alpha$ be the independently scattered stable random measure
whose cumulative process is $Z_\alpha$, and define
\[
 \mathcal L_{\alpha,\lambda}(t)
 :=\int_{\R}K_\lambda(t,y)\,M_\alpha(\dd y).
\]

\begin{theorem}[Second-order logarithmically tempered limit]
\label{thm:secondorder}
Let \cref{ass:innovations,ass:filter} hold and assume
\eqref{eq:secondregime}.  Put
\begin{equation}
 R_N(t):=
 \frac{1}{B_N\ell(N)}
 \left\{
 S_N(t)-L(N)\sum_{j=1}^{\floor{Nt}}\varepsilon_j
 \right\}.
 \label{eq:RN}
\end{equation}
Then as $N$ tends to infinity,
\begin{equation}
 \{R_N(t):t\ge0\}
 \xrightarrow{\fdd}
 \{\mathcal L_{\alpha,\lambda}(t):t\ge0\}.
 \label{eq:secondfdd}
\end{equation}
If $\alpha=2$, $\E|\varepsilon_0|^{2+\delta}<\infty$ for some
$\delta>0$, and the step processes are polygonally interpolated, then
for every $T<\infty$,
\begin{equation}
 R_N\Rightarrow\mathcal L_{2,\lambda}
 \quad\text{in }C[0,T].
 \label{eq:secondC}
\end{equation}
\end{theorem}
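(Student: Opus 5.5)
We write the partial sum as a linear form in the innovations, $S_N(t)=\sum_{k\in\Z}c_{N}(t,k)\varepsilon_k$, with
\[
 c_N(t,k)=\sum_{n=1}^{\floor{Nt}}b_{N,n-k}\1_{\{n-k\ge1\}}
 =Q^\circ_N\bigl(\floor{Nt}-k\bigr)-Q^\circ_N(-k),
\]
where $Q^\circ_N(m):=\sum_{1\le j\le m}b_{N,j}$, which vanishes for $m\le0$. Subtracting $L(N)\sum_{k=1}^{\floor{Nt}}\varepsilon_k$ and dividing by $\ell(N)$ gives the kernel
\[
 K_N(t,k)=\frac{c_N(t,k)-L(N)\1_{\{1\le k\le\floor{Nt}\}}}{\ell(N)} ,
\]
so that $R_N(t)=B_N^{-1}\sum_k K_N(t,k)\varepsilon_k$. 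The plan is the standard approach for linear forms in domain-of-attraction innovations. First, prove pointwise convergence $K_N(t,\floor{Ny})\to K_\lambda(t,y)$ for a.e.\ $y$. Second, prove a uniform $L^r$-type domination. Third, invoke the general criterion that $B_N^{-1}\sum_k f_N(k)\varepsilon_k\Rightarrow\int f\,dM_\alpha$ whenever the step functions $\tilde f_N(y)=f_N(\floor{Ny}+1)$ converge to $f$ in $L^\alpha\cap L^{\alpha'}$ for some $\alpha'<\alpha$ (or in $L^2$ when $\alpha=2$). By linearity and Cramér--Wold, applied to $\sum_i\theta_iK_N(t_i,\cdot)$, this yields \eqref{eq:secondfdd}. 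The criterion itself I would prove via characteristic functions, using $\E e^{iu\varepsilon_0}=1-\psi(u)$ with $\psi(u)\sim |u|^\alpha h(1/|u|)\times$const near $0$. The Potter bounds then control $\sum_k\psi(f_N(k)/B_N)$ by $N^{-1}\sum_k(|f_N(k)|^{\alpha\pm\eta})$. This is the same mechanism as in \citet{Xu2025}, and I would cite it rather than reprove it.

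For the pointwise limit, the key step is a uniform asymptotic for the tempered sums at the scale $\ell(N)$. For fixed $x>0$ I would show
\[
 Q^\circ_N(Nx)-L(N)=\ell(N)\,G_\lambda(x)+o(\ell(N)).
\]
To obtain it, split $Q^\circ_N(Nx)=L(N)+\bigl[L(Nx)-L(N)\bigr]+\sum_{j\le Nx}(\e^{-\lambda_Nj}-1)a_j$. The first bracket is $\ell(N)\log x+o(\ell(N))$ by uniform convergence of slowly varying functions: it is a Riemann sum of $\ell(Nu)/u$ over $[1,x]$. The second sum is a Riemann sum of $\ell(N)\int_0^x(\e^{-\lambda u}-1)u^{-1}\,du$. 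The small-$j$ part is negligible, because $|\e^{-\lambda_Nj}-1|\le\lambda_Nj$ gives $\sum_{j\le\delta N}\lambda_N\ell(j)\lesssim\delta\lambda\,\ell(N)$ by Karamata. The terms with $y\in(0,t)$ give $G_\lambda(t-y)$, and those with $y<0$ give the difference $G_\lambda(t-y)-G_\lambda(-y)$. Both match \eqref{eq:Kkernel}, noting that for $y<0$ the $L(N)$ terms cancel.

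The main obstacle is the domination needed for $L^{\alpha\pm\eta}$ convergence, both near the logarithmic singularities and in the tail $y\to-\infty$. I would split the sum over $k$ into three ranges. For $|k|\le \delta N$ or $|k-Nt|\le\delta N$, I would bound $|Q^\circ_N(m)-L(N)|\le L(N)$ and use $\ell(N)^{-1}|L(m)-L(N)|\le C(1+|\log(m/N)|)^{1+\eta}$-type Potter estimates for $m\ge1$. For $m\ge1$ small, $L(N)/\ell(N)$ could be large, which is delicate. Since $\varepsilon$ terms are then weighted by $\delta N$ indices only, I expect to show that the contribution has $r$-th moment sum $\le N^{-1}\sum_{m\le\delta N}|L(m)-L(N)|^{r}/\ell(N)^r\to$ something $O(\delta^{1-\eta})$, via Karamata for the integrated power of $\log$. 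For the tail $k<-AN$, the increment $c_N(t,k)=\sum_{j=-k+1}^{\floor{Nt}-k}b_{N,j}\le Nt\,\e^{-\lambda_N|k|}\ell(|k|)/|k|$ is bounded by $C\,t\,(\ell(|k|)/\ell(N))\,N/|k|$. This is $L^r$-summable over $|k|>AN$ for $r>1$, with small contribution as $A\to\infty$, and this covers $\lambda=0$ without using the exponential. Given these bounds, dominated convergence yields $\tilde K_N\to K_\lambda$ in $L^r$ for $r$ in a neighbourhood of $\alpha$, which also proves $K_\lambda(t,\cdot)\in L^r$.

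For \eqref{eq:secondC} with $\alpha=2$ and $\E|\varepsilon_0|^{2+\delta}<\infty$, f.d.d.\ convergence is already available, so only tightness of the polygonal processes remains. I would use $\E|R_N(t)-R_N(s)|^2=\sigma^2N^{-1}\ell(N)^{-2}N\sum_k|K_N(t,k)-K_N(s,k)|^2\cdot(\ldots)$. This is bounded by $C|t-s|^{1-\eta}$ from the same kernel estimates, since increments of $G_\lambda$ over length $h$ cost $h\log^2h$ in $L^2$. Passing to $(2+\delta)$-th moments through Rosenthal's inequality gives $\E|R_N(t)-R_N(s)|^{2+\delta}\le C|t-s|^{(1-\eta)(1+\delta/2)}$. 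The exponent exceeds $1$ for small $\eta$, so the Kolmogorov criterion applies for grid points with $|t-s|\ge1/N$. Within a single grid cell, the polygonal interpolation is handled by the trivial linear bound.
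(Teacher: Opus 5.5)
Your proposal is correct and follows essentially the same route as the paper: the exact linear representation, pointwise kernel convergence from $Q^\circ_N(Nx)-L(N)=\ell(N)G_\lambda(x)+o(\ell(N))$ (needed locally uniformly in $x$), Potter-based $L^r$ control at the two logarithmic singularities and in the remote past, a characteristic-function weighted-array lemma with $L^{\alpha\pm\eta}$ control plus Cram\'er--Wold, and, for $\alpha=2$, Rosenthal's inequality with a square-sum bound on kernel increments and Kolmogorov's criterion. Two small corrections. First, Potter gives $\ell(N)^{-1}|L(m)-L(N)|\le C(m/N)^{-\eta}$, not a power-of-$\log$ bound; the latter can fail for oscillating $\ell$, but you do not need it. Second, your array criterion as stated requires, besides $L^\alpha\cap L^{\alpha'}$ convergence, the uniform $L^{\alpha+\eta}$ bound or $\max_k|f_N(k)|=o(B_N)$; your domination supplies this, as does $\max_k|K_N(t,k)|=N^{o(1)}$.
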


\begin{remark}[Optimality for non-Gaussian limits]
\label{rem:secondpaths}
For $1<\alpha<2$, the logarithmic singularity at $y=t$ is encountered
by the atoms of the stable random measure.  The process
$\mathcal L_{\alpha,\lambda}$ generally has no c\`adl\`ag
modification.  Therefore \eqref{eq:secondfdd} should not be promoted to
$D[0,T]$ convergence without smoothing the process or changing the
state space. 
\end{remark}

\begin{remark}[Connection with TFSM II]
\label{rem:derivative}
Let $Z^{\mathrm{II}}_{H,\alpha,\lambda}$ denote the tempered
fractional stable motion of the second kind (see eq.(1.8) in \cite{SabzikarSurgailis2018a}):
$$
Z^{\mathrm{II}}_{H,\alpha,\lambda}=\int_{\mathbb{R}}h_{H,\alpha,\lambda}(t;y)M_\alpha(\dd y), ~~~t\in\mathbb{R},
$$
with
\begin{align*}
h_{H,\alpha,\lambda}(t;y)=(t-y)_+^{H-1/\alpha}e^{-\lambda(t-y)_+}-(-y)_+^{H-1/\alpha}e^{-\lambda(-y)_+}
+\lambda\int_0^t(s-y)_+^{H-1/\alpha}e^{-\lambda(s-y)_+}\dd s.
\end{align*}
At the critical value
$H=1/\alpha$, its kernel reduces to
\[
 h_{1/\alpha,\alpha,\lambda}(t;y)=\1_{(0,t)}(y),
\]
and hence
$Z^{\mathrm{II}}_{1/\alpha,\alpha,\lambda}=Z_\alpha$ for every
$\lambda\ge0$.  Differentiating its deterministic kernel with respect
to $d=H-1/\alpha$ at $d=0$ gives \eqref{eq:Kkernel}.  More precisely,
at the level of stable integrals,
\[
 \mathcal L_{\alpha,\lambda}(t)
 =
 \left.\frac{\partial}{\partial d}
 \left\{\Gamma(d+1)^{-1}
 Z^{\mathrm{II}}_{d+1/\alpha,\alpha,\lambda}(t)\right\}
 \right|_{d=0}
 -\gamma_{\!E}Z_\alpha(t).
\]
Thus the second-order limit is the logarithmic tangent process to the
TFSM II family at its L\'evy boundary.
\end{remark}

The contributions are fourfold.  First, we identify the non-summable
$d=0$ boundary class omitted by the standard tempered fractional
theory.  Second, we obtain a unified coefficient normalizer and a
complete weak--moderate--strong phase diagram.  Third, we strengthen
the boundary finite-dimensional theorem to an $M_1$ functional limit
under the natural positivity assumption.  Fourth, we identify a new
second-order logarithmically tempered stable limit that retains the
moderate tempering parameter erased at first order.

The rest of the paper has three sections.  \Cref{sec:preliminaries}
collects regular-variation, remote-past, cluster-compression, and
weighted-sum lemmas.  \Cref{sec:firstproof} proves the first-order
finite-dimensional and functional limits.  \Cref{sec:secondproof}
proves the second-order theorem and its Gaussian functional version.

\section{Preliminaries and technical lemmas}
\label{sec:preliminaries}

\subsection{Slow variation at the logarithmic boundary}

We repeatedly use the uniform convergence theorem and Potter bounds
for slowly varying functions; see \citet{BinghamGoldieTeugels1987} or
\citet{Resnick2007}.  Sums and integrals are interchangeable at the
precision used below.

\begin{lemma}[Integrated slow variation]
\label{lem:integratedSV}
Under \cref{ass:filter}, $L$ is slowly varying,
\begin{equation}
 \frac{\ell(x)}{L(x)}\longrightarrow0, ~~~~\text{as} ~~x\to\infty,
 \label{eq:elloverL}
\end{equation}
and, for every $c>0$,
\begin{equation}
 \frac{L(cx)-L(x)}{\ell(x)}\longrightarrow\log c.
 \label{eq:deHaan}
\end{equation}
More generally, if $0<a<b<\infty$ and $f$ is continuous on $[a,b]$,
then as $N\to\infty$,
\begin{equation}
 \frac1{\ell(N)}
 \sum_{aN<j\le bN}\frac{\ell(j)}{j}f(j/N)
 \longrightarrow
 \int_a^b\frac{f(u)}{u}\dd u.
 \label{eq:RiemannSV}
\end{equation}
The same conclusion holds with $a=0$ whenever
$f(u)=O(u^\rho)$ as $u\downarrow0$ for some $\rho>0$.
\end{lemma}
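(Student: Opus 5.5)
The plan is to prove \eqref{eq:RiemannSV} first, then derive \eqref{eq:deHaan}, and finally get slow variation of $L$ and \eqref{eq:elloverL} from \eqref{eq:deHaan} together with the divergence \eqref{eq:Ldiv}.

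\textbf{Step 1: the Riemann sum \eqref{eq:RiemannSV} for $0<a<b$.} By the uniform convergence theorem for slowly varying functions (measurable $\ell$ suffices),
\[
\sup_{u\in[a,b]}\Bigl|\frac{\ell(uN)}{\ell(N)}-1\Bigr|\to0 .
\]
Writing $j=uN$, we get
\[
\frac{1}{\ell(N)}\sum_{aN<j\le bN}\frac{\ell(j)}{j}f(j/N)
=\sum_{aN<j\le bN}\frac1N\,\frac{f(j/N)}{j/N}\,\bigl(1+o(1)\bigr),
\]
with the $o(1)$ uniform in $j$. Since $f(u)/u$ is continuous on $[a,b]$, the remaining sum is a Riemann sum converging to $\int_a^b f(u)u^{-1}\dd u$.

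\textbf{Step 2: the case $a=0$.} Fix a small $\eta>0$ and split the sum at $\eta N$. On $(\eta N,bN]$, Step 1 applies. For $j\le \eta N$, use Potter bounds with exponent $\rho/2$: for $N$ large and $j\ge x_0$ we have $\ell(j)/\ell(N)\le C(j/N)^{-\rho/2}$. Also $|f(j/N)|\le C(j/N)^\rho$. The contribution of $x_0\le j\le \eta N$ is therefore at most
\[
C\sum_{j\le\eta N}\frac1N\,(j/N)^{\rho/2-1}\le C'\eta^{\rho/2}.
\]
The finitely many $j<x_0$ contribute
\[
O\Bigl(N^{-\rho}\big/\ell(N)\Bigr)\to0,
\]
because $\ell$ is locally bounded and $\ell(N)\ge N^{-\rho/2}$ eventually. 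Letting $N\to\infty$ and then $\eta\to0$ gives the claim.

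\textbf{Step 3: \eqref{eq:deHaan}.} For $c>1$,
\[
L(cx)-L(x)=\sum_{x<j\le cx}\frac{\ell(j)}{j}.
\]
Applying Step 1 with $N=x$ (taking real $x$ in place of the integer $N$ is harmless), $f\equiv1$, $a=1$, $b=c$ gives the limit $\log c$. For $c<1$, apply this with $c^{-1}$ and base point $cx$, and use $\ell(cx)/\ell(x)\to1$.

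\textbf{Step 4: slow variation of $L$ and \eqref{eq:elloverL}.} Once $\ell(x)/L(x)\to0$ is known, \eqref{eq:deHaan} gives
\[
\frac{L(cx)}{L(x)}-1=\frac{\ell(x)}{L(x)}\,(\log c+o(1))\to0,
\]
so $L$ is slowly varying. The main obstacle is \eqref{eq:elloverL}, since it requires the divergence \eqref{eq:Ldiv}. I would fix $M>1$ and write
\[
L(x)\ge L(x)-L(xM^{-1})=\sum_{x/M<j\le x}\frac{\ell(j)}{j}.
\]
By Step 1 with $a=M^{-1}$, $b=1$, $f\equiv1$, this sum is $\ell(x)(\log M+o(1))$. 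Hence
\[
\limsup_{x\to\infty}\frac{\ell(x)}{L(x)}\le\frac{1}{\log M},
\]
and letting $M\to\infty$ gives \eqref{eq:elloverL}. This argument uses only the positivity of $\ell$; \eqref{eq:Ldiv} is then only needed to make $L$ an unbounded slowly varying function, as required elsewhere in the paper.

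Finally, $L$ is defined as a sum over integers rather than an integral, but the UCT-based Riemann-sum estimates in Step 1 already handle the discretisation directly.
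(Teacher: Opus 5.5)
Your proof is correct and follows essentially the paper's route: the uniform convergence theorem for the Riemann sums and \eqref{eq:deHaan}, Potter bounds near zero for the case $a=0$, then $\ell=o(L)$ and finally slow variation of $L$. Your single-interval bound $L(x)\ge L(x)-L(x/M)=\ell(x)(\log M+o(1))$ is a clean, complete version of the ``geometric intervals'' step that the paper only sketches. It also correctly shows that \eqref{eq:elloverL} needs only slow variation and positivity of $\ell$, not the divergence \eqref{eq:Ldiv}.
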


\begin{proof}
For $c>1$,
\[
L(cx) - L(x) = \sum_{x < j \le cx} \frac{\ell(j)}{j}.
\]
Then
\[
\frac{L(cx) - L(x)}{\ell(x)}
= \sum_{x < j \le cx} \frac{1}{j}\,\frac{\ell(j)}{\ell(x)}.
\]
Since $\ell$ is a slowly varying function, the uniform convergence theorem for slowly varying functions yields
\[
\sup_{1\le u\le c} \left| \frac{\ell(xu)}{\ell(x)} - 1 \right| \to 0, ~~~x\to\infty.
\]
In particular,
\[
\sup_{x<j\le cx} \left| \frac{\ell(j)}{\ell(x)} - 1 \right| \to 0.
\]
Hence we split the sum:
\[
\frac{L(cx) - L(x)}{\ell(x)}
= \sum_{x<j\le cx} \frac{1}{j}
+ \sum_{x<j\le cx} \frac{1}{j}\left( \frac{\ell(j)}{\ell(x)} - 1 \right).
\]
For the second term, we have the bound
\[
\left| \sum_{x<j\le cx} \frac{1}{j}\left( \frac{\ell(j)}{\ell(x)} - 1 \right) \right|
\le \sup_{x<j\le cx}\left| \frac{\ell(j)}{\ell(x)} - 1 \right| \cdot \sum_{x<j\le cx}\frac{1}{j}
\to 0.
\]
On the other hand, the harmonic sum converges to the integral:
\[
\sum_{x<j\le cx}\frac{1}{j} \;\to\; \int_{1}^{c}\frac{du}{u} = \log c.
\]
Combining these results, we conclude
\[
\frac{L(cx) - L(x)}{\ell(x)} \to \log c.
\]

The case $c<1$ follows by reversing the sum
\[
L(cx) - L(x) = -\sum_{cx < j \le x} \frac{\ell(j)}{j}.
\]
The statement holds trivially when $c=1$. Hence for all $c>0$, as $x\to\infty$,
\[
\frac{L(cx) - L(x)}{\ell(x)} \to \log c.
\]

Relation \eqref{eq:RiemannSV} is the corresponding Riemann-sum statement,
uniformly on compact subsets of $(0,\infty)$.  Let $0 < a < b < \infty$, and suppose $f$ is continuous on $[a,b]$. Define
\[
g(u) := \frac{f(u)}{u}.
\]
Since $a>0$, the function $g$ is continuous and bounded on $[a,b]$.
Rewrite the sum under consideration as
\[
\begin{aligned}
S_N(a,b) := \frac{1}{\ell(N)} \sum_{aN < j \le bN} \frac{\ell(j)}{j} f\!\left(\frac{j}{N}\right) 
= \frac{1}{N} \sum_{aN < j \le bN} \frac{\ell(j)}{\ell(N)} g\!\left(\frac{j}{N}\right).
\end{aligned}
\]
By the uniform convergence theorem for slowly varying functions, set
\[
\Delta_N := \sup_{aN < j \le bN} \left| \frac{\ell(j)}{\ell(N)} - 1 \right| \to 0.
\]
Therefore,
\[
\left| S_N(a,b) - \frac{1}{N}\sum_{aN < j \le bN} g\!\left(\frac{j}{N}\right) \right|
\le \Delta_N \cdot \frac{1}{N}\sum_{aN < j \le bN} \left|g\!\left(\frac{j}{N}\right)\right|
\to 0.
\]
On the other hand, convergence of the standard Riemann sum yields
\[
\frac{1}{N}\sum_{aN < j \le bN} g\!\left(\frac{j}{N}\right) \to \int_{a}^{b} g(u)\,\mathrm{d}u.
\]
Hence we obtain
\[
\frac{1}{\ell(N)} \sum_{aN < j \le bN} \frac{\ell(j)}{j} f\!\left(\frac{j}{N}\right)
\to \int_{a}^{b} \frac{f(u)}{u}\,\mathrm{d}u.
\]

If $a=0$, suppose that
\[
|f(u)| \le C u^\rho,\quad 0 < u \le u_0,
\]
where $\rho > 0$.
Fix
\[
0 < \delta < \min\{1, u_0, b\}.
\]
Decompose the sum as
\[
\begin{aligned}
S_N(0,b)
&= \frac{1}{\ell(N)} \sum_{1\le j\le \delta N} \frac{\ell(j)}{j} f\!\left(\frac{j}{N}\right)
+ \frac{1}{\ell(N)} \sum_{\delta N< j\le bN} \frac{\ell(j)}{j} f\!\left(\frac{j}{N}\right) \\
&=: I_{N,\delta} + J_{N,\delta}.
\end{aligned}
\]
For fixed $\delta>0$, the result from \eqref{eq:RiemannSV}  implies
\[
J_{N,\delta} \to \int_{\delta}^{b} \frac{f(u)}{u}\,\mathrm{d}u.
\]
Hence it suffices to verify that
\[
\lim_{\delta\downarrow 0}\,\limsup_{N\to\infty} |I_{N,\delta}| = 0.
\]

Since $f(u) = O(u^\rho)$, pick any $0<\varepsilon<\rho$. Fix a sufficiently large integer $j_0$, and decompose the sum as
\[
I_{N,\delta}
= \frac{1}{\ell(N)} \sum_{1\le j<j_0} \frac{\ell(j)}{j} \big|f(j/N)\big|
+ \frac{1}{\ell(N)} \sum_{j_0\le j\le \delta N} \frac{\ell(j)}{j} \big|f(j/N)\big|.
\]
For the finite initial part, using $f(j/N)=O\big((j/N)^\rho\big)$, we have
\[
\frac{1}{\ell(N)} \sum_{1\le j<j_0} \frac{\ell(j)}{j} \big|f(j/N)\big|
= O\left( \frac{N^{-\rho}}{\ell(N)} \right) = o(1).
\]
For the range $j_0\le j\le \delta N$, we apply the Potter bound to obtain rigorously
\[
\frac{\ell(j)}{\ell(N)} \le C_\varepsilon \left(\frac{j}{N}\right)^{-\varepsilon}.
\]
Hence,
\[
\begin{aligned}
\frac{1}{\ell(N)} \sum_{j_0\le j\le \delta N} \frac{\ell(j)}{j} \big|f(j/N)\big|
&\le C N^{\varepsilon-\rho} \sum_{j_0\le j\le \delta N} j^{\rho-\varepsilon-1} \\
&\le C' \delta^{\rho-\varepsilon}.
\end{aligned}
\]
This gives the desired result for the case $a=0$.

Finally, since $L$ diverges, apply
\eqref{eq:deHaan} successively on geometric intervals, or use the
boundary form of Karamata's theorem, to obtain
$\ell(x)=o(L(x))$.  Then
$L(cx)/L(x)\to1$, so $L$ is slowly varying.
\end{proof}

\begin{lemma}[Tempered cumulative sums]
\label{lem:temperedcumulative}
Let
\[
 A_N(x):=\sum_{j=1}^{\floor{Nx}}
 \e^{-\lambda_Nj}\frac{\ell(j)}{j}.
\]
Under \cref{ass:filter,ass:tempering}, for every $x>0$,
\[
 \frac{A_N(x)}{A_N(1)}\longrightarrow1, ~~~~\text{as} ~~N\to \infty.
\]
If $\lambda_*<\infty$, then $A_N(1)\sim L(N)$.  If
$\lambda_*=\infty$, then
$A_N(1)\sim L(1/\lambda_N)$.
\end{lemma}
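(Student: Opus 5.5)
The plan is to compare $A_N(x)$ with the untempered sums $L(\cdot)$, using \cref{lem:integratedSV} throughout. The two facts we rely on are $\ell(x)=o(L(x))$ from \eqref{eq:elloverL} and the de Haan increment relation \eqref{eq:deHaan}. We treat the cases $\lambda_*<\infty$ and $\lambda_*=\infty$ separately. In each case the goal is an asymptotic $A_N(x)\sim L(\cdot)$, where the argument of $L$ does not depend on $x$ in a way that matters, so that the ratio statement follows at once from slow variation of $L$.

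\emph{Case $\lambda_*<\infty$.} Here $\lambda_N j\le N\lambda_N x$ for $j\le Nx$, and $N\lambda_N x\to\lambda_* x$, so this quantity is bounded by some $C_x$. Write
\[
L(\floor{Nx})-A_N(x)=\sum_{j\le Nx}(1-\e^{-\lambda_Nj})\frac{\ell(j)}{j}.
\]
Using $1-\e^{-u}\le u\wedge 1$, we bound the right side by $\lambda_N\sum_{j\le Nx}\ell(j)$. Karamata's theorem gives $\sum_{j\le Nx}\ell(j)\sim Nx\,\ell(N)$, so the bound is $O(N\lambda_N\,\ell(N))=O(\ell(N))$. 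Alternatively one can apply \eqref{eq:RiemannSV} with $a=0$ and $f(u)=1-\e^{-\lambda_* u}$, which is $O(u)$ near $0$; in that case we must also handle the approximation of $N\lambda_N$ by $\lambda_*$ uniformly, which is easy since $|\e^{-a}-\e^{-b}|\le|a-b|$. Either way $A_N(x)=L(Nx)+O(\ell(N))$. Then \eqref{eq:elloverL} and slow variation of $L$ give $A_N(x)\sim L(N)$ for every $x>0$, and both claims follow.

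\emph{Case $\lambda_*=\infty$.} Put $M_N=1/\lambda_N$, so that $M_N\to\infty$ and $M_N=o(N)$. Fix $\delta\in(0,1)$ and $K>1$ and split $A_N(x)$ into three ranges: $j\le\delta M_N$, $\delta M_N<j\le KM_N$, and $KM_N<j\le Nx$. The last range is nonempty for large $N$ since $M_N=o(N)$.

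\begin{enumerate}[label=(\alph*)]
\item On the first range the factor $\e^{-\lambda_Nj}$ lies in $[\e^{-\delta},1]$. The sum is therefore between $\e^{-\delta}L(\delta M_N)$ and $L(\delta M_N)$, and $L(\delta M_N)\sim L(M_N)$.
\item The middle range contributes at most $L(KM_N)-L(\delta M_N)\sim\ell(M_N)\log(K/\delta)$ by \eqref{eq:deHaan}, which is $o(L(M_N))$.
\item For the tail we bound
\[
\sum_{j>KM_N}\e^{-j/M_N}\frac{\ell(j)}{j}\le \sum_{k\ge1}\e^{-kK}\bigl(L((k+1)KM_N)-L(kKM_N)\bigr).
\]
A Potter bound gives $\ell(y)/\ell(M_N)\le C(y/M_N)^{\epsilon}$ for $y\ge M_N$. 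Hence each block difference is at most $C\ell(M_N)(kK)^{\epsilon}\log\frac{k+1}{k}$, and the whole series is $O(\ell(M_N))=o(L(M_N))$.
\end{enumerate}

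Combining (a)–(c) gives $\e^{-\delta}\le\liminf A_N(x)/L(M_N)\le\limsup A_N(x)/L(M_N)\le1$, and letting $\delta\downarrow0$ yields $A_N(x)\sim L(1/\lambda_N)$ independently of $x$.

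The main obstacle is making the tail estimate in the strong-tempering case uniform. Potter bounds are needed there to control $\ell$ on the unbounded range $j>KM_N$ against the exponential weight, so that the tail is $O(\ell(M_N))$ rather than merely bounded by $L(Nx)$, which could be much larger. The rest is routine given \cref{lem:integratedSV}.
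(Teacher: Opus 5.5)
Your proof is correct, and it differs from the paper's in the strong-tempering case.

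\textbf{Weak and moderate tempering.} Here you use the paper's idea of comparing $A_N$ with $L$, but more efficiently. The paper splits at $\eta N$ and bounds $1-\e^{-\lambda_Nj}\le 1-\e^{-C\eta}$ on $j\le\eta N$. This yields only $L(N)-A_N(1)=o(L(N))$ after $\eta\downarrow0$, so the paper separately notes $|A_N(x)-A_N(1)|=O(\ell(N))$. Your single bound $\lambda_N\sum_{j\le Nx}\ell(j)=O(N\lambda_N\ell(N))$, via Karamata, gives $A_N(x)=L(Nx)+O(\ell(N))$ for every $x$ at once. This is the same estimate the paper itself uses later in Lemma~\ref{lem:secondexpansion} and Lemma~\ref{lem:kernelconv}.

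\textbf{Strong tempering.} The paper invokes the Abelian theorem for Laplace--Stieltjes transforms, $\int\e^{-\lambda_N x}\dd L(x)\sim L(1/\lambda_N)$. It then asserts, without details, that the part $j>Nx$ is $o(L(1/\lambda_N))$ by Potter bounds. Your three-range split proves the needed Abelian statement by hand. Because you work directly with the truncated sum $A_N(x)$, no separate truncation step is needed. Your block estimate (c) also gives the quantitative tail bound $O(\ell(1/\lambda_N))$, which is essentially what the paper redoes in Case~2 of Lemma~\ref{lem:compressedcluster}. The paper's route is shorter because it cites standard theory; yours is self-contained and supplies the tail estimate the paper only sketches.

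\textbf{A small slip in (c).} On the $k$th block the Potter factor should be $((k+1)K)^{\epsilon}$ rather than $(kK)^{\epsilon}$. The harmonic sum over the block is $\log\frac{k+1}{k}+O\bigl(1/(kKM_N)\bigr)$ rather than exactly $\log\frac{k+1}{k}$. Both corrections are absorbed by the weight $\e^{-kK}$ and do not affect the conclusion.
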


\begin{proof}
Suppose first that $N\lambda_N$ is bounded.  For fixed $\eta\in(0,1)$,
\[
 0\le L(N)-A_N(1)
 \le \bigl(1-\e^{-C\eta}\bigr)L(\eta N)
     +\{L(N)-L(\eta N)\},
\]
for all sufficiently large $N$ and a fixed $C<\infty$.  Divide by
$L(N)$, use slow variation, and then let $\eta\downarrow0$.  This
proves $A_N(1)\sim L(N)$.  Moreover, for fixed $x>0$,
\[
 |A_N(x)-A_N(1)|=O(\ell(N))
\]
by Lemma \ref{lem:integratedSV}, and the ratio conclusion follows from
\eqref{eq:elloverL}.

Now suppose $N\lambda_N\to\infty$.  Regarding $L$ as a
right-continuous nondecreasing function, the Abelian theorem for
Laplace--Stieltjes transforms yields
\[
 \sum_{j=1}^{\infty}\e^{-\lambda_Nj}\frac{\ell(j)}{j}
 =\int_{[1,\infty)}\e^{-\lambda_Nx}\dd L(x)
 \sim L(1/\lambda_N).
\]
For every fixed $x>0$, $Nx\lambda_N\to\infty$.  Potter bounds and an
exponential-tail estimate show that the part with $j>Nx$ is
$o(L(1/\lambda_N))$.  Hence both $A_N(x)$ and $A_N(1)$ are
asymptotic to $L(1/\lambda_N)$.
\end{proof}

\begin{lemma}[Moderate second-order expansion]
\label{lem:secondexpansion}
Assume $N\lambda_N\to\lambda\in[0,\infty)$.  For every $x>0$,
\[
 \frac{A_N(x)-L(N)}{\ell(N)}
 \overset{N\to\infty}{\longrightarrow} G_\lambda(x),
\]
where
\[
 G_\lambda(x)
 =\log x+\int_0^x\frac{\e^{-\lambda u}-1}{u}\dd u.
\]
The convergence is locally uniform for $x$ in compact subsets of
$(0,\infty)$.
\end{lemma}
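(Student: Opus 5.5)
The plan is to split $A_N(x)-L(N)$ into the untempered increment $L(Nx)-L(N)$ (up to floor effects) and a tempering correction, and handle each with \cref{lem:integratedSV}. Precisely,
\[
 A_N(x)-L(N)=\bigl\{L(Nx)-L(N)\bigr\}
 +\sum_{1\le j\le Nx}\bigl(\e^{-\lambda_Nj}-1\bigr)\frac{\ell(j)}{j}.
\]
By \eqref{eq:deHaan}, the first term divided by $\ell(N)$ tends to $\log x$. Local uniformity in $x\in[a,b]\subset(0,\infty)$ comes from monotonicity of $x\mapsto L(Nx)$ together with continuity of the limit $\log x$, via a Dini/P\'olya-type argument.

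For the correction term, I would first replace $\lambda_N$ by $\lambda/N$. Using $|\e^{-s}-\e^{-t}|\le|s-t|$, the substitution error is bounded by
\[
 |N\lambda_N-\lambda|\,\frac1N\sum_{j\le Nx}\ell(j)=|N\lambda_N-\lambda|\cdot O(x\,\ell(N)),
\]
where Karamata's theorem gives $\sum_{j\le M}\ell(j)\sim M\ell(M)$. This error is $o(\ell(N))$, uniformly for $x\le b$. After the substitution, the sum is
\[
 \sum_{1\le j\le Nx}\frac{\ell(j)}{j}f(j/N),\qquad f(u):=\e^{-\lambda u}-1,
\]
with $f$ continuous and $f(u)=O(u)$ as $u\downarrow0$. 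The $a=0$ case of \eqref{eq:RiemannSV}, applied with $\rho=1$, gives convergence of this sum divided by $\ell(N)$ to $\int_0^x(\e^{-\lambda u}-1)u^{-1}\dd u$. If $\lambda=0$, the term vanishes identically and $G_0(x)=\log x$, consistent with the formula.

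The main obstacle is uniformity in $x$, because \eqref{eq:RiemannSV} is stated pointwise in the endpoints. I would handle it by splitting at a fixed $\delta N$ with $\delta<a$. The piece over $j\le\delta N$ does not depend on $x$, and by the Potter argument in the proof of \cref{lem:integratedSV} it is within $C\delta^{1-\varepsilon}$ of its limit asymptotically. On $\delta N<j\le Nx$, the uniform convergence theorem makes $\ell(j)/\ell(N)\to1$ uniformly. The remaining Riemann sums $N^{-1}\sum_{\delta N<j\le Nx}g(j/N)$, with $g$ bounded and continuous on $[\delta,b]$, converge uniformly in $x\in[a,b]$ with error $O(1/N)$ from the endpoint. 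Letting $\delta\downarrow0$ completes the argument.

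Finally, the floor in $\floor{Nx}$ versus $Nx$ changes the sum by at most one term, of size $O(\ell(N)/N)=o(\ell(N))$, so it does not affect the limit.
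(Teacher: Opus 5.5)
Your proof is correct and follows essentially the paper's route. Both use the same split into $L(Nx)-L(N)$, handled by \eqref{eq:deHaan}, plus a tempering correction handled by \eqref{eq:RiemannSV} with a linear/Potter bound near zero. You make two points rigorous that the paper passes over quickly: replacing $\lambda_N$ by $\lambda/N$ first, so that \eqref{eq:RiemannSV} is applied to a fixed $f$, and giving an explicit monotonicity/splitting argument for local uniformity in $x$.
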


\begin{proof}
Write
\[
 A_N(x)-L(N)
 =\{L(Nx)-L(N)\}
  +\sum_{j\le Nx}
    \bigl(\e^{-\lambda_Nj}-1\bigr)\frac{\ell(j)}{j}.
\]
The first term divided by $\ell(N)$ tends to $\log x$ by
\eqref{eq:deHaan}.  Apply \eqref{eq:RiemannSV} to the second term
away from zero.  Near zero,
$|\e^{-\lambda_Nj}-1|\le Cj/N$, and Potter's bound supplies an
integrable majorant.  Thus the second term converges to
$\int_0^x(\e^{-\lambda u}-1)u^{-1}\dd u$.  The same estimates are
uniform when $x$ stays in a compact subset of $(0,\infty)$.
\end{proof}

\subsection{Cluster compression and remote-past estimates}

For $m\ge0$ and $T>0$, put
\[
 D_N(m,T):=
 \sum_{r=m+1}^{m+\floor{NT}}b_{N,r}.
\]
This is the coefficient with which the innovation at time $-m$
enters the partial sums observed over $[0,T]$.

\begin{lemma}[Remote past]
\label{lem:remotepast}
For every $p>1$ and $T<\infty$, as $N\to\infty$,
\[
 \frac1N\sum_{m=0}^{\infty}
 \left(\frac{D_N(m,T)}{Q_N}\right)^p
 \longrightarrow0.
\]
\end{lemma}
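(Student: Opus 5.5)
The plan is to split the sum over $m$ at the scale $m\asymp N$ and to combine two bounds on $D_N(m,T)$. The first is the trivial bound $D_N(m,T)\le Q_N(m/N+T)$ together with Lemma~\ref{lem:temperedcumulative}. The second is a decay bound for large $m$. Throughout we use that $0\le D_N(m,T)\le Q_N(T)\sim Q_N$ for every $m$, which follows from Lemma~\ref{lem:temperedcumulative} with $x=T$.

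\emph{Near range.} Fix $K$ large and consider $0\le m\le KN$. Here $D_N(m,T)/Q_N$ is bounded, but this alone is not enough, since $KN$ terms of size $O(1)$ divided by $N$ give $O(K)$, not $o(1)$. We therefore use the finer structure. Write $D_N(m,T)=A_N((m+\floor{NT})/N)-A_N(m/N)$ up to rounding. For $m\ge \eta N$ with $\eta>0$ fixed, Lemma~\ref{lem:temperedcumulative} gives $A_N(y)/A_N(1)\to1$ uniformly for $y\in[\eta,K+T]$; the uniformity follows from monotonicity of $A_N$ in $y$. Hence $D_N(m,T)/Q_N\to0$ uniformly on $\eta N\le m\le KN$. (Alternatively, the bound $D_N\le \sum_{m<r\le m+NT}\ell(r)/r=O(\ell(N))$ from \eqref{eq:deHaan} gives the same conclusion combined with \eqref{eq:elloverL}.) For $m<\eta N$, use $D_N/Q_N\le Q_N(T)/Q_N\le 2$. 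This part contributes at most $2^p\eta$, and that vanishes as $\eta\downarrow0$ after $N\to\infty$.

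\emph{Far range, $m>KN$.} For $r>m>KN$ we have $b_{N,r}\le \ell(r)/r$, so
\[
D_N(m,T)\le \floor{NT}\,\max_{m<r\le m+NT}\frac{\ell(r)}{r}\le C\,NT\,\frac{\ell(m)}{m},
\]
by the uniform convergence theorem, since $(m+NT)/m\le 1+T/K$. By Potter's bound with a small $\varepsilon<(p-1)/p$, $\ell(m)\le C\ell(N)(m/N)^{\varepsilon}$ for $m\ge N$. Hence
\[
\frac1N\sum_{m>KN}\Bigl(\frac{D_N}{Q_N}\Bigr)^p\le C\frac{\ell(N)^p}{Q_N^p}\,\frac1N\sum_{m>KN}(m/N)^{-p(1-\varepsilon)}\le C\Bigl(\frac{\ell(N)}{Q_N}\Bigr)^p K^{1-p(1-\varepsilon)}.
\]
This tends to $0$, because $\ell(N)=o(L(N))$ and $L(N)\lesssim Q_N$ in the weak and moderate regimes.

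\emph{Main obstacle.} The hard case is strong tempering, where $Q_N\sim L(1/\lambda_N)$ may be much smaller than $L(N)$. Two bounds must then be checked. First, in the near range the estimate $O(\ell(N))$ must be compared with $Q_N$. Second, in the far range we need $\ell(N)/L(1/\lambda_N)\to0$, which need not hold if $\ell$ grows. In this regime the tempering factor has to be kept. For $m\ge \eta N$ we get $D_N\le NT\,\e^{-\lambda_N m}\ell(m)/m$, and summing gives roughly $\ell(N)^pT^p\e^{-p\lambda_N\eta N}$ times polynomial factors. Because $N\lambda_N\to\infty$ this decays exponentially and beats any Potter ratio $(N\lambda_N)^{\varepsilon}$ that arises from comparing $\ell(N)$ with $\ell(1/\lambda_N)$. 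The region $m<\eta N$ is again handled by the trivial bound $D_N\le Q_N(T)\lesssim Q_N$ and contributes $O(\eta)$. I expect the careful Potter comparison between $\ell(N)$ and $L(1/\lambda_N)$ under exponential damping to be the delicate step.
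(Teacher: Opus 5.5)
Your proof is correct. In the weak and moderate regimes it is essentially the paper's argument: the paper splits at $\delta N$ and applies Potter's bound with $\ell(N)=o(L(N))$ on all of $m>\delta N$, whereas you insert the window $\eta N\le m\le KN$ and handle it by the ratio limit.

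The real difference is under strong tempering.

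\begin{itemize}
\item The paper works at the tempering scale $r_N=\lambda_N^{-1}=o(N)$. The terms with $m\le Kr_N$ are bounded, and there are only $O(r_N)=o(N)$ of them. For $m>Kr_N$ it shows $D_N(m,T)\le CQ_N\e^{-cm/r_N}(m/r_N)^\eta$, comparing $\ell$ only with $\ell(r_N)$. The tail therefore contributes $O(r_N/N)$.
\item You stay at the sample scale $N$. Your window argument uses only the conclusion $A_N(x)/A_N(1)\to1$ of Lemma~\ref{lem:temperedcumulative}, made uniform on $[\eta,K+T]$ by monotonicity of $A_N$. So it disposes of $m\le KN$ in all three regimes at once.
\item In your route the regime enters only in the far tail. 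There you pay the Potter loss $\ell(N)/Q_N\le C(N\lambda_N)^{\varepsilon}$, which follows from $\ell(N)/\ell(r_N)\le C(N/r_N)^{\varepsilon}$ and $\ell(r_N)=o(L(r_N))$. You recover it from the factor $\e^{-pKN\lambda_N}$.
\item This bookkeeping does close: $(N\lambda_N)^{p\varepsilon}\e^{-pKN\lambda_N}\to0$, and the remaining $m$-sum is bounded by $\int_K^\infty u^{-p(1-\varepsilon)}\dd u$ once $\varepsilon<1-1/p$.
\end{itemize}

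The paper's route is more quantitative and matches the short-memory picture, in which only $O(1/\lambda_N)$ remote innovations matter. Yours is more uniform across regimes and reuses the scale theorem instead of deriving new tail estimates near the tempering scale.

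One slip: your claim that $D_N(m,T)\le Q_N(T)$ for every $m$ is not justified. Neither $\ell$ nor $j\mapsto b_{N,j}$ is assumed monotone, so a shifted window need not carry less mass than the initial one. You only use this bound for $m<\eta N$. There your other bound, $D_N(m,T)\le Q_N(m/N+T)\le Q_N(\eta+T)\sim Q_N$, does the same job, so the contribution is still at most $2^p\eta$ for large $N$.
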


\begin{proof}
We give the estimates separately because they explain the phase
transition.  If $N\lambda_N$ is bounded, then $Q_N\sim L(N)$.
For $m\le\delta N$, the summand is uniformly bounded, and this part
is at most $C\delta$.  For $m>\delta N$, positivity and Potter's
bound give, with any sufficiently small $\eta>0$,
\[
 D_N(m,T)
 \le C N\frac{\ell(m)}{m},
 \qquad
 \frac{\ell(m)}{\ell(N)}
 \le C\max\{(m/N)^\eta,(m/N)^{-\eta}\}.
\]
Choose $\eta<1-1/p$.  Comparing the remaining sum with an integral,
\[
 \frac1N\sum_{m>\delta N}
 \left(\frac{D_N(m,T)}{Q_N}\right)^p
 \le
 C_\delta
 \left(\frac{\ell(N)}{L(N)}\right)^p
 \int_\delta^\infty u^{-p+p\eta}\dd u
 \longrightarrow0.
\]
Letting $\delta\downarrow0$ proves the assertion.

If $N\lambda_N\to\infty$, write $r_N=\lambda_N^{-1}=o(N)$.
Now $Q_N\sim L(r_N)$.  The range $m\le Kr_N$ contributes at most
$CKr_N/N=o(1)$.  For $m>Kr_N$, the exponential factor and Potter's
bound yield
\[
 D_N(m,T)
 \le C Q_N\,\e^{-c m/r_N}(m/r_N)^\eta.
\]
Consequently, the remaining part is bounded by
\[
 C\frac{r_N}{N}
 \int_K^\infty \e^{-cpu}u^{p\eta}\dd u=o(1).
\]
\end{proof}

\begin{lemma}[Compressed cluster]
\label{lem:compressedcluster}
For every fixed $0<T<\infty$, there exists a sequence of positive
integers $m_N$ such that
\[
m_N\longrightarrow\infty,
\qquad
\frac{m_N}{N}\longrightarrow0,
\]
and
\begin{equation}
 \frac1{Q_N}\sum_{j=1}^{m_N}b_{N,j}
 \xrightarrow[N\to\infty]{}1,
 \qquad
 \frac1{Q_N}\sum_{m_N<j\le \floor{NT}}b_{N,j}
 \xrightarrow[N\to\infty]{}0.
 \label{eq:clustercompression}
\end{equation}
\end{lemma}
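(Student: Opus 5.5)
The plan is a diagonal argument built on Theorem~\ref{thm:scale} (equivalently Lemma~\ref{lem:temperedcumulative}), which gives $Q_N(c)/Q_N\to1$ for every fixed $c>0$. Since $b_{N,j}>0$, the map $x\mapsto Q_N(x)$ is nondecreasing. For fixed $T$, the two statements in \eqref{eq:clustercompression} are equivalent up to $o(1)$. Indeed,
\[
 \frac1{Q_N}\sum_{j=1}^{m_N}b_{N,j}+\frac1{Q_N}\sum_{m_N<j\le\floor{NT}}b_{N,j}=\frac{Q_N(T)}{Q_N}\longrightarrow1.
\]
So it suffices to construct $m_N\to\infty$ with $m_N/N\to0$ and $Q_N(m_N/N)/Q_N\to1$, where we write $\sum_{j\le m_N}b_{N,j}=Q_N(m_N/N)$.

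\textbf{Step 1.} Apply Theorem~\ref{thm:scale} with $c=1/k$ for each integer $k\ge1$. This gives $Q_N(1/k)/Q_N\to1$ as $N\to\infty$. Choose increasing thresholds $N_1<N_2<\cdots$ such that
\[
 \Bigl|\frac{Q_N(1/k)}{Q_N}-1\Bigr|\le\frac1k\quad\text{and}\quad \floor{N/k}\ge k\qquad\text{for all }N\ge N_k.
\]

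\textbf{Step 2.} For $N_k\le N<N_{k+1}$ put $k_N=k$, and for $N<N_1$ put $k_N=1$. Then $k_N\to\infty$. Define $m_N=\max\{1,\floor{N/k_N}\}$. We get $m_N\ge k_N\to\infty$ and $m_N/N\le 1/k_N\to0$ (for large $N$). Also $\sum_{j\le m_N}b_{N,j}=Q_N(1/k_N)$, so by construction $|Q_N(1/k_N)/Q_N-1|\le 1/k_N\to0$. This is the first relation in \eqref{eq:clustercompression}, and the second follows from the identity above.

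\textbf{Main point.} The only substantive input is the slow-variation fact $Q_N(c)/Q_N\to1$ for every $c$, which is already available. Its content is that $L(N)-L(N/k)=O(\ell(N)\log k)=o(L(N))$, and the analogue holds with $L(1/\lambda_N)$ in the strong regime.

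One could also choose $m_N$ explicitly. In the strong regime, $m_N=\floor{\lambda_N^{-1}\log(N\lambda_N)}\vee\floor{K_N/\lambda_N}$ with slowly growing $K_N$ would work. In the weak/moderate regime, $m_N=\floor{N/\log L(N)}$ works provided $\ell(N)\log\log L(N)=o(L(N))$, which need not hold uniformly. That uncertainty is why I prefer the diagonal construction, which requires no rate information. The only care needed is with monotonicity and integer parts, to make sure $\floor{N/k_N}\ge1$.
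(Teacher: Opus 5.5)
Your proof is correct. It is the paper's Case~1 diagonal construction: the paper uses $d_k=\min\{T/2,1/k\}$ and requires $N_kd_k\ge 2k$, which plays the role of your condition $\floor{N/k}\ge k$. The difference is that you run this argument in all three regimes at once. The paper instead treats $N\lambda_N\to\infty$ separately. There it takes $m_N=\floor{Nc_N}$ with $c_N=\min\{T/2,\underline h_N^{-1/2}\}$, where $\underline h_N=\inf_{k\ge N}k\lambda_k$, so that $m_N\lambda_N\to\infty$. It then shows directly, from Potter's bound and the exponential factor, that the whole tail $\sum_{j>m_N}b_{N,j}$ is $o(\ell(1/\lambda_N))=o(Q_N)$. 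You observe that Theorem~\ref{thm:scale} (i.e.\ Lemma~\ref{lem:temperedcumulative}) already gives $Q_N(c)/Q_N\to1$ for every fixed $c>0$ when $\lambda_*=\infty$. That makes the separate case redundant, which the paper's own Case~2 implicitly confirms, since it repeats the exponential-tail estimate behind that lemma and still invokes the lemma for $A_N(T)/Q_N\to1$. As a bonus, your $m_N$ does not depend on $T$. What the paper's Case~2 buys is an explicit cutoff lying far beyond the tempering scale, and the sharper fact that the entire infinite tail beyond $m_N$, not just the part up to $\floor{NT}$, is $o(\ell(1/\lambda_N))$. Neither is needed for the lemma as stated. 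The one detail you leave implicit is that your splitting identity requires $m_N\le\floor{NT}$. This holds eventually because $m_N/N\to0$, and in any case the second sum is simply empty otherwise.
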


\begin{proof}
Recall that
\[
 A_N(x)=\sum_{j=1}^{\floor{Nx}}b_{N,j},
 \qquad
 Q_N=A_N(1),
\]
and that $b_{N,j}\ge0$. We treat the bounded and strong tempering
regimes separately.

\medskip
\noindent
\textit{Case 1: $N\lambda_N$ is bounded.}
By Lemma~\ref{lem:temperedcumulative}, for every fixed $c>0$,
\begin{equation}
 \frac{A_N(c)}{Q_N}\longrightarrow1.
 \label{eq:fixed-c-ratio}
\end{equation}
We now give the diagonal construction explicitly. Let
\[
 d_k:=\min\left\{\frac{T}{2},\frac1k\right\},
 \qquad k\ge1.
\]
Then $d_k\downarrow0$. For each $k$, relation
\eqref{eq:fixed-c-ratio}, applied with $c=d_k$, allows us to choose
strictly increasing integers $N_k$ such that
\begin{equation}
 \left|\frac{A_N(d_k)}{Q_N}-1\right|\le\frac1k
 \quad\text{for every }N\ge N_k,
 \qquad
 N_kd_k\ge 2k.
 \label{eq:diagonal-choice}
\end{equation}
For $N_k\le N<N_{k+1}$, set
\[
 c_N:=d_k,
 \qquad
 m_N:=\floor{Nc_N}.
\]
If $k=k(N)$ denotes the corresponding block index, then
$k(N)\to\infty$ and hence
\[
 c_N=d_{k(N)}\longrightarrow0.
\]
Moreover, by \eqref{eq:diagonal-choice},
\[
 m_N
 \ge Nc_N-1
 \ge N_{k(N)}d_{k(N)}-1
 \ge 2k(N)-1
 \longrightarrow\infty,
\]
whereas
\[
 0\le\frac{m_N}{N}\le c_N\longrightarrow0.
\]
The first relation in \eqref{eq:clustercompression} now follows from
\[
 \frac1{Q_N}\sum_{j=1}^{m_N}b_{N,j}
 =\frac{A_N(c_N)}{Q_N},
\]
because \eqref{eq:diagonal-choice} gives
\[
 \left|\frac{A_N(c_N)}{Q_N}-1\right|
 \le\frac1{k(N)}
 \longrightarrow0.
\]

Since $c_N\le T/2$, we have $m_N<\floor{NT}$ for all sufficiently
large $N$. Therefore,
\[
 \frac1{Q_N}\sum_{m_N<j\le\floor{NT}}b_{N,j}
 =
 \frac{A_N(T)-A_N(c_N)}{Q_N}.
\]
Lemma~\ref{lem:temperedcumulative} gives
$A_N(T)/Q_N\to1$, while the preceding argument gives
$A_N(c_N)/Q_N\to1$. Consequently,
\[
 \frac{A_N(T)-A_N(c_N)}{Q_N}\longrightarrow1-1=0,
\]
which proves the second relation.

\medskip
\noindent
\textit{Case 2: $N\lambda_N\to\infty$.}
Put
\[
 h_N:=N\lambda_N,
 \qquad
 \underline h_N:=\inf_{k\ge N}h_k.
\]
Because $h_N\to\infty$, the sequence $\underline h_N$ is
nondecreasing and $\underline h_N\to\infty$. Define, for all
sufficiently large $N$,
\[
 c_N:=
 \min\left\{\frac{T}{2},\underline h_N^{-1/2}\right\},
 \qquad
 m_N:=\floor{Nc_N}.
\]
Then $c_N\downarrow0$. Eventually
$c_N=\underline h_N^{-1/2}$, and hence
\[
 c_NN\lambda_N
 =\frac{h_N}{\sqrt{\underline h_N}}
 \ge\sqrt{\underline h_N}
 \longrightarrow\infty.
 \label{eq:beyond-tempering-scale}
\]
Furthermore, since $\underline h_N\le h_N=N\lambda_N$,
\[
 Nc_N
 =\frac{N}{\sqrt{\underline h_N}}
 \ge\frac{N}{\sqrt{N\lambda_N}}
 =\sqrt{\frac{N}{\lambda_N}}
 \longrightarrow\infty.
\]
It follows that
\[
 m_N\longrightarrow\infty,
 \qquad
 \frac{m_N}{N}\le c_N\longrightarrow0.
\]

It remains to justify carefully that truncation at $m_N$ captures
asymptotically all of the tempered coefficient mass. Set
\[
 r_N:=\lambda_N^{-1}.
\]
Then $r_N\to\infty$, $r_N=o(N)$, and by
$c_N N \lambda_N\to\infty$ and $ m_N:=\floor{Nc_N}$,
\[
 u_N:=\frac{m_N}{r_N}=m_N\lambda_N
 \longrightarrow\infty.
 \label{eq:uN-infinity}
\]
Thus $m_N$ lies far beyond the effective tempering scale
$r_N=1/\lambda_N$.

The Abelian estimate used in the proof of
Lemma~\ref{lem:temperedcumulative} yields
\begin{equation}
 H_N:=\sum_{j=1}^{\infty}b_{N,j}
 \sim L(r_N),
 \qquad
 Q_N\sim L(r_N).
 \label{eq:full-tempered-mass}
\end{equation}
Consider the tail beyond $m_N$,
\[
 \mathcal{R}_N:=\sum_{j>m_N}b_{N,j}
 =\sum_{j>m_N}e^{-j/r_N}\frac{\ell(j)}{j}.
\]
Fix $\varepsilon\in(0,1)$. Since $u_N\to\infty$, we have
$j/r_N\ge1$ whenever $j>m_N$ and $N$ is sufficiently large.
Potter's bound therefore gives
\[
 \frac{\ell(j)}{\ell(r_N)}
 \le C_\varepsilon
 \left(\frac{j}{r_N}\right)^\varepsilon,
 \qquad j>m_N.
\]
Consequently,
\begin{align*}
\mathcal{R}_N
 &\le
 C_\varepsilon\ell(r_N)
 \sum_{j>m_N}
 e^{-j/r_N}\frac1j
 \left(\frac{j}{r_N}\right)^\varepsilon                                      
 \frac{C_\varepsilon\ell(r_N)}{r_N}
 \sum_{j>m_N}
 e^{-j/r_N}
 \left(\frac{j}{r_N}\right)^{\varepsilon-1}                                    \\
 &\le
 C_\varepsilon\ell(r_N)
 \int_{u_N/2}^{\infty}e^{-v}v^{\varepsilon-1}\,dv.
 \label{eq:strong-tail-bound}
\end{align*}
The last integral tends to zero because $u_N\to\infty$. Hence
\[
 \mathcal{R}_N=o\bigl(\ell(r_N)\bigr).
\]
By Lemma~\ref{lem:integratedSV},
$\ell(r_N)=o(L(r_N))$, so in particular
\[
 \mathcal{R}_N=o\bigl(L(r_N)\bigr).
 \label{eq:tail-negligible}
\]
Combining \eqref{eq:full-tempered-mass} with the estimate
$
 \mathcal{R}_N=o(L(r_N)),
$
we obtain
\[
 A_N(c_N)
 =\sum_{j=1}^{m_N}b_{N,j}
 =H_N-\mathcal{R}_N
 \sim L(r_N)
 \sim Q_N.
\]
This proves
\[
 \frac1{Q_N}\sum_{j=1}^{m_N}b_{N,j}
 =\frac{A_N(c_N)}{Q_N}
 \longrightarrow1.
\]

Finally, $c_N\le T/2$ implies $m_N<\floor{NT}$ eventually. Hence
\[
 \frac1{Q_N}\sum_{m_N<j\le\floor{NT}}b_{N,j}
 =
 \frac{A_N(T)-A_N(c_N)}{Q_N}.
\]
Lemma~\ref{lem:temperedcumulative} gives
$A_N(T)/Q_N\to1$, and we have just proved
$A_N(c_N)/Q_N\to1$. Their difference therefore converges to zero.
This proves the second relation and completes the proof.
\end{proof}

\begin{lemma}[Block coefficients]
\label{lem:blockcoeff}
Let $0=t_0<t_1<\cdots<t_m\le T$.  For $1\le k\le m$ and
$j\in\Z$, set
\[
 C_{N,k}(j):=
 \sum_{n=(j+1)\vee(\floor{Nt_{k-1}}+1)}^{\floor{Nt_k}}
 b_{N,n-j},
\]
with an empty sum equal to zero.  For every $\delta>0$, when $N\to\infty$,
\begin{align}
 &\sup_{\floor{N(t_{k-1}+\delta)}\le j
          \le\floor{N(t_k-\delta)}}
 \left|\frac{C_{N,k}(j)}{Q_N}-1\right|
 \longrightarrow0,
 \label{eq:mainblock}\\
 &\sup_{\floor{N(t_{k-1}+\delta)}\le j
          \le\floor{N(t_k-\delta)}}
 \frac{C_{N,i}(j)}{Q_N}
 \longrightarrow0,\qquad i>k.
 \label{eq:crossblock}
\end{align}
Furthermore, the indices within distance $\delta N$ of the block
endpoints contribute at most $C\delta$ to the normalized empirical
$p$th powers, uniformly in $N$, for every $p>1$.
\end{lemma}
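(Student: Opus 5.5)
The plan is to rewrite each block coefficient as a tempered cumulative sum and then apply Lemma~\ref{lem:temperedcumulative} uniformly in the position $j$. For $j$ with $\floor{Nt_{k-1}}\le j<\floor{Nt_k}$, the substitution $r=n-j$ gives
\[
 C_{N,k}(j)=\sum_{r=1}^{\floor{Nt_k}-j}b_{N,r}=A_N\!\left(\frac{\floor{Nt_k}-j}{N}\right),
\]
up to an integer-part adjustment of at most one term $b_{N,r}\le\max_r b_{N,r}$. By Remark~\ref{rem:J1failure}, or directly from $\ell(r)/r\to0$ together with local boundedness of $\ell$, this extra term is $o(Q_N)$.

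\textbf{Main block \eqref{eq:mainblock}.} If $\floor{N(t_{k-1}+\delta)}\le j\le\floor{N(t_k-\delta)}$, the argument $x=(\floor{Nt_k}-j)/N$ lies in $[\delta-1/N,\,T]$. Since $A_N(\cdot)$ is nondecreasing (the $b_{N,r}$ are positive),
\[
 A_N(\delta/2)\le C_{N,k}(j)\le A_N(T)
\]
for all such $j$ once $N$ is large. Lemma~\ref{lem:temperedcumulative} applied at the two fixed points $\delta/2$ and $T$ gives $A_N(\delta/2)/Q_N\to1$ and $A_N(T)/Q_N\to1$. This sandwich yields the uniform statement directly, so no separate uniformity argument is needed.

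\textbf{Cross block \eqref{eq:crossblock}.} For $i>k$ and $j\le\floor{N(t_k-\delta)}<\floor{Nt_{i-1}}$, the lower limit of summation is $\floor{Nt_{i-1}}+1$. So
\[
 C_{N,i}(j)=A_N\!\left(\tfrac{\floor{Nt_i}-j}{N}\right)-A_N\!\left(\tfrac{\floor{Nt_{i-1}}-j}{N}\right),
\]
which is a difference of $A_N$ evaluated at two points in $[\delta/2,T]$. By monotonicity this is at most $A_N(T)-A_N(\delta/2)$, and that is $o(Q_N)$ by Lemma~\ref{lem:temperedcumulative}. Again the bound is uniform in $j$.

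\textbf{Boundary indices.} Every coefficient satisfies $0\le C_{N,i}(j)\le A_N(T)\le 2Q_N$ for large $N$. The indices $j\ge0$ within distance $\delta N$ of some block endpoint number at most $2m(\delta N+1)$. Their contribution to $\frac1N\sum_j (C_{N,i}(j)/Q_N)^p$ is therefore at most $2^{p+1}m\delta+o(1)$, which gives the $C\delta$ bound. Negative indices $j<0$ are the remote past, with $C_{N,i}(j)\le D_N(-j,T)$, and are already handled by Lemma~\ref{lem:remotepast}.

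The only delicate point is the uniformity in $j$, and monotonicity of $A_N$ reduces it to two fixed-point limits. Beyond that I expect no real obstacle; what remains is bookkeeping of integer parts.
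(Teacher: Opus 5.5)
Your proof is correct and takes essentially the same route as the paper. Both arguments write $C_{N,k}(j)$, and $C_{N,i}(j)$ for $i>k$, as $A_N$ (or a difference of two values of $A_N$) evaluated at arguments lying in a compact subset of $(0,\infty)$, apply Lemma~\ref{lem:temperedcumulative}, and control the endpoint strips by positivity, $A_N(T)/Q_N=O(1)$, and an $O(\delta N)$ count of indices; your monotone sandwich $A_N(\delta/2)\le C_{N,k}(j)\le A_N(T)$ just spells out the uniformity in $j$ that the paper asserts in one line (also, the identity $C_{N,k}(j)=A_N\bigl((\floor{Nt_k}-j)/N\bigr)$ is exact, so no integer-part correction is needed).
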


\begin{proof}
For $j$ in the interior of the $k$th block,
\[
 C_{N,k}(j)=A_N\bigl(t_k-j/N+o(1)\bigr),
\]
whose ratio to $Q_N$ tends uniformly to one by Lemma \ref{lem:temperedcumulative}.  If $i>k$, then
\[
 C_{N,i}(j)
 =A_N\bigl(t_i-j/N+o(1)\bigr)
  -A_N\bigl(t_{i-1}-j/N+o(1)\bigr),
\]
and both cumulative sums are asymptotic to $Q_N$, uniformly because
their arguments stay in a compact subset of $(0,\infty)$.
The endpoint estimate follows from positivity, the uniform bound
$A_N(T)/Q_N\le C_T$, and the fact that the endpoint strips contain
at most $C\delta N$ indices.
\end{proof}

\subsection{Weighted stable sums and the logarithmic kernel}

The following standard triangular-array result is stated in the form
needed below.  It follows from characteristic functions, or from the
random-measure convergence associated with \eqref{eq:iid-fclt}; see
\citet{KasaharaMaejima1988}.

\begin{lemma}[Weighted stable array]
\label{lem:weightedarray}
Let $w_{N,j}\in\mathbb R$, $j\in\mathbb Z$, and define
$
 w_N(y):=w_{N,\floor{Ny}},  y\in\mathbb R.
$
Suppose that, for some $w\in L^\alpha(\mathbb R)$,
\begin{equation}
 \int_{\mathbb R}|w_N(y)-w(y)|^\alpha\,dy\longrightarrow0,
 \label{eq:Lalphaconv}
\end{equation}
and
\begin{equation}
 \frac{\max_{j\in\mathbb Z}|w_{N,j}|}{B_N}\longrightarrow0.
 \label{eq:maxweight}
\end{equation}
Assume in addition that, for some
$0<\eta<\alpha-1$,
\begin{equation}
 \sup_{N\ge1}\int_{\mathbb R}
 \left\{|w_N(y)|^{\alpha-\eta}
       +|w_N(y)|^{\alpha+\eta}\right\}\,dy<\infty.
 \label{eq:neighborLp}
\end{equation}
Then, under Assumption~\ref{ass:innovations},
\[
 \frac1{B_N}\sum_{j\in\mathbb Z}w_{N,j}\varepsilon_j
 \xrightarrow[N\to\infty]{d}
 \int_{\mathbb R}w(y)M_\alpha(dy).
\]
For $\alpha=2$, the same result holds with $L^2$ in
\eqref{eq:Lalphaconv}, with $0<\eta<1$ in
\eqref{eq:neighborLp}, and with $M_2$ denoting the Gaussian random
measure.
\end{lemma}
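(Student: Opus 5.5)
We prove the weighted stable array lemma (\cref{lem:weightedarray}) with characteristic functions. Independence and the fact that the weights are deterministic reduce the claim to showing that, for every $\theta\in\R$,
\[
 \sum_{j\in\Z}\log\E\exp\!\Bigl(i\theta\,\frac{w_{N,j}}{B_N}\varepsilon_0\Bigr)
 \longrightarrow
 \log\E\exp\!\Bigl(i\theta\int_{\R}w(y)M_\alpha(\dd y)\Bigr)
 =-\int_{\R}\psi_\alpha\bigl(\theta w(y)\bigr)\dd y,
\]
where $\psi_\alpha$ is the Lévy exponent of $Z_\alpha(1)$, so that $\E\e^{i\theta Z_\alpha(1)}=\e^{-\psi_\alpha(\theta)}$.

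\textbf{Step 1: use of the maximum condition.} By \eqref{eq:maxweight}, $\sup_j|w_{N,j}|/B_N\to0$. Hence every factor $\phi(\theta w_{N,j}/B_N)$ is close to $1$, where $\phi$ is the characteristic function of $\varepsilon_0$. This justifies $\log\phi(u)=(\phi(u)-1)(1+o(1))$ uniformly in $j$. The second-order error $\sum_j|\phi(\theta w_{N,j}/B_N)-1|^2$ is negligible once Step 3 gives a bound of order $\sum_j|1-\phi(\cdot)|$ times $\max_j|1-\phi(\cdot)|\to0$.

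\textbf{Step 2: the local expansion.} The domain-of-attraction assumption yields the regular-variation statement $1-\phi(u)\sim\psi_\alpha(u)/\bigl(u^{-\alpha}h(1/|u|)\bigr)\cdot|u|^\alpha h(1/|u|)$. More usefully, in the normalization of \cref{ass:innovations},
\[
 N\bigl(1-\phi(v/B_N)\bigr)\longrightarrow\psi_\alpha(v)
\]
uniformly for $v$ in compacts; this is exactly \eqref{eq:iid-fclt} at $t=1$. Writing $v_{N,j}=\theta w_{N,j}$, we get
\[
 \sum_j\bigl(1-\phi(v_{N,j}/B_N)\bigr)
 =\int_{\R}N\bigl(1-\phi(\theta w_N(y)/B_N)\bigr)\dd y.
\]
We need this integrand to converge to $\psi_\alpha(\theta w(y))$ and to be dominated. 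Pointwise convergence along a subsequence where $w_N\to w$ a.e.\ (available from \eqref{eq:Lalphaconv}) follows from the local uniform convergence and continuity of $\psi_\alpha$, together with Potter bounds on $h$ (or $V$) when $w_N(y)$ is not in a compact set.

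\textbf{Step 3: domination, the main obstacle.} The difficulty is uniformity in $v$ both small and large: $v=\theta w_N(y)$ can be close to $0$ or up to $o(B_N)$. By Potter bounds for the slowly varying $h$ (resp.\ $V$), we have, for $|v|\le B_N$,
\[
 N|1-\phi(v/B_N)|\le C\bigl(|v|^{\alpha-\eta}+|v|^{\alpha+\eta}\bigr).
\]
This bound comes from the standard estimate $|1-\phi(u)|\le C|u|^\alpha h(1/|u|)$ for $\alpha<2$, or $C u^2V(1/|u|)$ for $\alpha=2$, using $\E\varepsilon_0=0$ and the truncated-moment bounds. Combined with the ratio $h(B_N/|v|)/h(B_N)\le C\max(|v|^\eta,|v|^{-\eta})$, it gives the claim. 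Condition \eqref{eq:neighborLp} then gives uniform integrability of the integrands, since $|w_N|^{\alpha\pm\eta}$ are bounded in $L^1$ and converge in measure to their limits. We upgrade this to Vitali's theorem: the $L^\alpha$ convergence makes $|w_N|^\alpha$ uniformly integrable, and the $\pm\eta$ moments control both the regions where $w_N$ is small, where mass can escape to infinity in $y$, and where $w_N$ is large. Vitali then gives convergence of the integrals, first along the subsequence and hence along the full sequence, since the limit does not depend on the subsequence.

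\textbf{Step 4: conclusion.} The limit $-\int\psi_\alpha(\theta w)$ is the log-characteristic function of $\int w\,\dd M_\alpha$. Lévy's continuity theorem finishes the proof. The Gaussian case is identical, with $\psi_2(v)=v^2/2$ and $V$ in place of $h$.
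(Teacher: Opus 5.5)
Your plan follows the paper closely: characteristic functions, replacing $\log\phi$ by $\phi-1$ via \eqref{eq:maxweight}, local uniform convergence of $N(1-\phi(v/B_N))$, and a Potter majorant. The paper instead splits $\R$ into $\{M^{-1}\le|w_N|\le M\}$ and its complement and adds a separate Lipschitz/H\"older step for $\int\Psi_\alpha(\theta w_N)\to\int\Psi_\alpha(\theta w)$. Using a.e.\ convergence along subsequences plus Vitali in its place is a legitimate, slightly more compact variant.

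Step 3, however, does not work as written. You take the Potter exponent equal to the $\eta$ of \eqref{eq:neighborLp}. You then infer uniform integrability because $|w_N|^{\alpha\pm\eta}$ are bounded in $L^1$ and converge in measure. That inference is false, and with this majorant neither the small-value nor the large-value region is controlled. For example, take $w_N=\epsilon_N\1_{[0,\epsilon_N^{-(\alpha-\eta)}]}$ with $\epsilon_N\downarrow0$, discretized on the grid $N^{-1}\Z$. It satisfies \eqref{eq:Lalphaconv} with $w=0$, \eqref{eq:maxweight} and \eqref{eq:neighborLp}. Yet $\int|w_N|^{\alpha-\eta}\,\dd y\ge1$ for every $N$, so your majorant never becomes small although the target integral is $0$. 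Tall thin spikes $H_N\1_{[0,H_N^{-(\alpha+\eta)}]}$ cause the same failure on the large-value side.

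The missing point is to leave slack in the exponent, which is exactly the paper's choice $\eta_0=\eta/2$. Use Potter with some $0<\eta_0<\eta$, so that $N|1-\phi(\theta w_N(y)/B_N)|\le C_\theta(|w_N(y)|^{\alpha-\eta_0}+|w_N(y)|^{\alpha+\eta_0})$; this is legitimate because $\max_j|w_{N,j}|/B_N\to0$. Then, for $M\ge1$,
\[
 \int_{\{|w_N|<M^{-1}\}\cup\{|w_N|>M\}}\bigl(|w_N|^{\alpha-\eta_0}+|w_N|^{\alpha+\eta_0}\bigr)\dd y
 \le 2M^{-(\eta-\eta_0)}\sup_{N}\int_{\R}\bigl(|w_N|^{\alpha-\eta}+|w_N|^{\alpha+\eta}\bigr)\dd y .
\]
On $\{M^{-1}\le|w_N|\le M\}$ the integrand is at most $C_{M,\theta}|w_N|^\alpha$, which is uniformly integrable and tight because $w_N\to w$ in $L^\alpha$. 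Only with this slack are the integrands uniformly integrable and tight, so that Vitali applies.

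Two smaller points. First, you should justify that $\sum_j w_{N,j}\varepsilon_j$ converges; that is what lets you write its characteristic function as an infinite product. The paper does this with von Bahr--Esseen in $L^{\alpha-\eta}$, using $\sum_j|w_{N,j}|^{\alpha-\eta}=N\int|w_N|^{\alpha-\eta}\,\dd y<\infty$. Second, the first displayed asymptotic for $1-\phi(u)$ in your Step 2 is garbled, though you never use it.
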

\begin{proof}
Let
\[
    \varphi(v):=\E e^{iv\varepsilon_0},
    \qquad
    q(v):=\varphi(v)-1.
\]
Since $\E\varepsilon_0=0$,
\[
 q(v)
 =
 \E\bigl(e^{iv\varepsilon_0}-1-iv\varepsilon_0\bigr).
\]
We first work with $q(v)$ and only afterwards return to
$\log\varphi(v)$.  This avoids a nonuniform remainder in the
expansion of the logarithm.

Suppose first that $1<\alpha<2$.  The inequality
\[
 |e^{ix}-1-ix|
 \leq C\min\{|x|^2,|x|\}
\]
together with Karamata's theorem gives
\[
 |q(v)|
 \leq C|v|^\alpha h(1/|v|),
 \qquad |v|\leq v_0.
\]
Indeed, splitting at $|\varepsilon_0|=|v|^{-1}$, the truncated part
is bounded by
\[
 Cv^2\E\left[
       \varepsilon_0^2
       \mathbf 1_{\{|\varepsilon_0|\leq |v|^{-1}\}}
     \right]
 \leq C|v|^\alpha h(1/|v|),
\]
whereas the remaining part is bounded by
\[
 C|v|\E\left[
       |\varepsilon_0|
       \mathbf 1_{\{|\varepsilon_0|>|v|^{-1}\}}
     \right]
 +C\Pp\bigl(|\varepsilon_0|>|v|^{-1}\bigr)
 \leq C|v|^\alpha h(1/|v|).
\]

For $\alpha=2$, the corresponding standard normal-domain-of-attraction
estimate is
\[
 |q(v)|
 \leq Cv^2V(1/|v|),
 \qquad
 V(x):=\E\left[
          \varepsilon_0^2
          \mathbf 1_{\{|\varepsilon_0|\leq x\}}
        \right].
\]
Choose $0<\eta_0<\eta$,  from the
normalization of $B_N$ and Potter's bound, it follows that
\[
 N\left|q\left(\frac{u}{B_N}\right)\right|
 \leq C
 \left(
   |u|^{\alpha-\eta_0}
   +|u|^{\alpha+\eta_0}
 \right)
\]
uniformly whenever $|u|/B_N=o(1)$.  For $\alpha=2$, the same argument
gives the preceding estimate with $\alpha$ replaced by $2$.

Define
\[
 \widetilde F_N(u)
 :=
 Nq(u/B_N).
\]
The domain-of-attraction assumption gives
\[
 N\log\varphi(u/B_N)\longrightarrow\Psi_\alpha(u)
\]
locally uniformly in $u$, where $\Psi_\alpha$ is the characteristic
exponent of $Z_\alpha(1)$.  Since, uniformly on compact sets,
$q(u/B_N)=O(N^{-1})$ and
\[
 \log(1+z)=z+O(|z|^2),
\]
we also have
\[
 \widetilde F_N(u)\longrightarrow\Psi_\alpha(u)
\]
locally uniformly in $u$.  Moreover,
\[
 |\Psi_\alpha(u)|\leq C|u|^\alpha.
\]

Fix $\theta\in\mathbb R$ and put
\[
 z_{N,j}:=\frac{\theta w_{N,j}}{B_N}.
\]
Condition \eqref{eq:maxweight} implies
\[
 \max_{j\in\mathbb Z}|z_{N,j}|\longrightarrow0.
\]
Consequently,
\begin{align*}
 \sum_{j\in\mathbb Z}|q(z_{N,j})|
 &=
 \int_{\mathbb R}
 N\left|
   q\left(\frac{\theta w_N(y)}{B_N}\right)
 \right|dy                                                     \\
 &\leq
 C_\theta\int_{\mathbb R}
 \left(
   |w_N(y)|^{\alpha-\eta_0}
   +|w_N(y)|^{\alpha+\eta_0}
 \right)dy                                                     \\
 &\leq C_\theta.
\end{align*}
The last bound follows from  \eqref{eq:neighborLp} on
$\{|w_N|\leq1\}$ the integrand is controlled by
$|w_N|^{\alpha-\eta}$, while on $\{|w_N|>1\}$ it is controlled by
$|w_N|^{\alpha+\eta}$.

We now justify the replacement of $\log\varphi$ by $q$.  Since
$\max_j|z_{N,j}|\to0$,
\[
 \max_j|q(z_{N,j})|\longrightarrow0.
\]
For sufficiently large $N$, $|q(z_{N,j})|\leq1/2$ for all $j$, and
therefore
\begin{align*}
 \left|
   \sum_j\log\varphi(z_{N,j})
   -
   \sum_jq(z_{N,j})
  \right|                                                     
\leq
 C\sum_j|q(z_{N,j})|^2                                       
\leq
 C\max_j|q(z_{N,j})|
   \sum_j|q(z_{N,j})|
 \longrightarrow0.
\end{align*}
Thus it remains to prove
\[
 \int_{\mathbb R}
   \widetilde F_N(\theta w_N(y))\,dy
 \longrightarrow
 \int_{\mathbb R}
   \Psi_\alpha(\theta w(y))\,dy.
\]

For $M>1$, define
\[
 D_{N,M}
 :=
 \left\{
  y\in\mathbb R:
  M^{-1}\leq|w_N(y)|\leq M
 \right\}.
\]
By  \eqref{eq:neighborLp},
\[
 |D_{N,M}|
 \leq
 M^{\alpha-\eta}
 \int_{\mathbb R}|w_N(y)|^{\alpha-\eta}\,dy
 \leq C_M.
\]
The local uniform convergence of $\widetilde F_N$ therefore gives
\[
 \int_{D_{N,M}}
 \left|
   \widetilde F_N(\theta w_N(y))
   -
   \Psi_\alpha(\theta w_N(y))
 \right|dy
 \longrightarrow0.
\]

It remains to control the regions where $w_N$ is very large or very
small.  Taking, for example, $\eta_0=\eta/2$, we obtain
\[
 \sup_N
 \int_{\{|w_N|>M\}}
 \left|
  \widetilde F_N(\theta w_N(y))
 \right|dy
 \leq
 C_\theta M^{-\eta/2},
\]
and
\[
 \sup_N
 \int_{\{|w_N|<M^{-1}\}}
 \left|
  \widetilde F_N(\theta w_N(y))
 \right|dy
 \leq
 C_\theta M^{-\eta/2}.
\]
Indeed, on $\{|w_N|>M\}$,
\[
 |w_N|^{\alpha+\eta/2}
 \leq
 M^{-\eta/2}|w_N|^{\alpha+\eta},
\]
while on $\{|w_N|<M^{-1}\}$,
\[
 |w_N|^{\alpha-\eta/2}
 \leq
 M^{-\eta/2}|w_N|^{\alpha-\eta}.
\]
The other power is smaller on the corresponding region.  Since
$|\Psi_\alpha(u)|\leq C|u|^\alpha$, the same two estimates hold with
$\widetilde F_N$ replaced by $\Psi_\alpha$.

Letting first $N\to\infty$ and then $M\to\infty$, we conclude that
\[
 \int_{\mathbb R}
 \left|
  \widetilde F_N(\theta w_N(y))
  -
  \Psi_\alpha(\theta w_N(y))
 \right|dy
 \longrightarrow0.
\]
Finally, $w_N\to w$ in $L^\alpha(\mathbb R)$ implies
\[
 \int_{\mathbb R}
   \Psi_\alpha(\theta w_N(y))\,dy
 \longrightarrow
 \int_{\mathbb R}
   \Psi_\alpha(\theta w(y))\,dy.
\]
For example, this follows from
\[
 |\Psi_\alpha(a)-\Psi_\alpha(b)|
 \leq
 C|a-b|
 \bigl(|a|^{\alpha-1}+|b|^{\alpha-1}\bigr)
\]
and Hölder's inequality.

Combining the preceding estimates gives
\[
 \log\E\exp\left\{
  \frac{i\theta}{B_N}
  \sum_{j\in\mathbb Z}w_{N,j}\varepsilon_j
 \right\}
 \longrightarrow
 \int_{\mathbb R}
   \Psi_\alpha(\theta w(y))\,dy.
\]
The expression on the right is the logarithm of the characteristic
function of
\[
 \int_{\mathbb R}w(y)\,M_\alpha(dy).
\]
L\'{e}vy's continuity theorem proves the desired convergence.

For completeness, the infinite weighted sums are well defined.  Put
$q_0=\alpha-\eta>1$.  Then
$\E|\varepsilon_0|^{q_0}<\infty$ and
\[
 \sum_{j\in\mathbb Z}|w_{N,j}|^{q_0}
 =
 N\int_{\mathbb R}|w_N(y)|^{q_0}\,dy
 <\infty.
\]
The von Bahr--Esseen inequality shows that the truncated weighted sums
form a Cauchy sequence in $L^{q_0}$.  The proof for $\alpha=2$ is
identical, using the truncated variance $V$ and the Gaussian exponent
$\Psi_2(u)=-u^2/2$.
\end{proof}

For $t\ge0$ and $i\in\Z$, define the centered second-order coefficient
\begin{equation}
 k_{N,t}(i):=
 \frac1{\ell(N)}
 \left[
 \sum_{n=1}^{\floor{Nt}}
 b_{N,n-i}\1_{\{n-i\ge1\}}
 -L(N)\1_{\{1\le i\le\floor{Nt}\}}
 \right].
 \label{eq:discretekernel}
\end{equation}

\begin{lemma}[Kernel convergence]
\label{lem:kernelconv}
Assume $N\lambda_N\to\lambda<\infty$.  For every
$t\in[0,T]$ and every $r>1$,
\[
 \int_\R
 \left|k_{N,t}(\floor{Ny})-K_\lambda(t,y)\right|^r
 \dd y\longrightarrow0,
\]
as $N$ tends to infinity. The same holds for every finite linear combination in $t$.
Moreover,
\begin{equation}
 \max_{i\in\Z}|k_{N,t}(i)|=N^{o(1)}.
 \label{eq:kernelmax}
\end{equation}
\end{lemma}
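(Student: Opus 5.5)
Rewrite the discrete kernel in terms of the tempered cumulative sums $A_N$. For $i\le\floor{Nt}$, the inner sum runs over $r=n-i$ from $1\vee(1-i)$ to $\floor{Nt}-i$. Hence, for $1\le i\le\floor{Nt}$,
\[
 k_{N,t}(i)=\frac{A_N\bigl((\floor{Nt}-i)/N\bigr)-L(N)}{\ell(N)},
\]
and, for $i\le0$,
\[
 k_{N,t}(i)=\frac{A_N\bigl((\floor{Nt}-i)/N\bigr)-A_N(-i/N)}{\ell(N)}
 =\frac{A_N\bigl((\floor{Nt}-i)/N\bigr)-L(N)}{\ell(N)}-\frac{A_N(-i/N)-L(N)}{\ell(N)}.
\]
Finally $k_{N,t}(i)=0$ for $i>\floor{Nt}$. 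With $i=\floor{Ny}$ this has exactly the shape of $K_\lambda(t,y)$ in \eqref{eq:Kkernel}. By Lemma~\ref{lem:secondexpansion}, which gives locally uniform convergence, $k_{N,t}(\floor{Ny})\to K_\lambda(t,y)$ uniformly for $y$ in compacts avoiding $\{0,t\}$. The plan is then an $L^r$ dominated-convergence argument: pointwise convergence off a null set, plus a uniform integrable majorant (or uniform integrability) near the singular points $y=0$, $y=t$ and near $y=-\infty$. Linear combinations in $t$ follow from Minkowski's inequality.

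Three regions must be controlled.

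\emph{Near the singularities.} For $0\le x\le\delta$, write $A_N(x)-L(N)=-(L(N)-L(Nx))-\sum_{j\le Nx}(1-\e^{-\lambda_Nj})a_j$. The second term is $O(\ell(N))$ uniformly. For the first, apply Potter bounds to $\ell(j)/\ell(N)$ for $j_0\le j\le N$, which gives
\[
 \frac{L(N)-L(Nx)}{\ell(N)}\le C\int_x^1u^{-1-\varepsilon}\,du\le Cx^{-\varepsilon},
\]
together with $L(N)/\ell(N)=N^{o(1)}$ for the tiny range $Nx<j_0$. Thus $|k_{N,t}(\floor{Ny})|\le C(|t-y|^{-\varepsilon}+|y|^{-\varepsilon})$ on a neighborhood of $\{0,t\}$, except on sets of Lebesgue measure $O(1/N)$ where the bound is $L(N)/\ell(N)$. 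Choose $\varepsilon r<1$, so the majorant is $L^r$-integrable. The exceptional strips contribute $N^{-1}(L(N)/\ell(N))^r\to0$, since $L(N)/\ell(N)\le C_\varepsilon N^{\varepsilon}$ by Potter. This same estimate gives \eqref{eq:kernelmax}: $|k_{N,t}(i)|\le C\,L(N)/\ell(N)+O(1)=N^{o(1)}$.

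\emph{Remote past $y\to-\infty$.} Here $k_{N,t}(i)=\ell(N)^{-1}\sum_{r=-i+1}^{\floor{Nt}-i}b_{N,r}\le C\,Nt\,\ell(|i|)/(|i|\ell(N))$. With $u=|i|/N$ and Potter's bound this is at most $Ct\,u^{-1+\varepsilon}$ for $u\ge1$, which lies in $L^r(1,\infty)$ once $r(1-\varepsilon)>1$. The limit kernel satisfies the same bound, since $G_\lambda(t-y)-G_\lambda(-y)\sim t/|y|$. On the compact middle range the uniform convergence from Lemma~\ref{lem:secondexpansion} suffices; that lemma also handles $\lambda=0$, with the exponential factor only helping.

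\textbf{Main obstacle.} The delicate step is making the near-zero bound uniform in $N$ down to $x\approx1/N$, i.e.\ at the scale of the lattice discretization $\floor{Ny}$. Lemma~\ref{lem:secondexpansion} only gives compact uniformity, so I would first derive the explicit Potter-type majorant directly from the sum for $A_N$, treating $j<j_0$ separately. Dominated convergence on $\{|y|\ge\delta,|t-y|\ge\delta\}$ plus these tails, letting $N\to\infty$ and then $\delta\downarrow0$, then completes the proof.
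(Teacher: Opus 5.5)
Your proposal is correct and follows essentially the same route as the paper: you write $k_{N,t}$ exactly through $H_N=(A_N-L(N))/\ell(N)$, get pointwise convergence from the locally uniform Lemma~\ref{lem:secondexpansion}, use Potter majorants of order $x^{-\eta}$ near $y\in\{0,t\}$ and $u^{-1+\eta}$ in the remote past with $\eta<\min\{1/r,1-1/r\}$, absorb the $O(1/N)$ exceptional cells via $L(N)/\ell(N)=N^{o(1)}$, and conclude by dominated convergence; your $\delta$-splitting and the use of Minkowski for linear combinations are only cosmetic variants of this. One small correction: for $\lambda>0$ the far-past limit kernel equals $\int_{|y|}^{|y|+t}\e^{-\lambda u}u^{-1}\dd u\le t\e^{-\lambda|y|}/|y|$, so it is bounded by $t/|y|$ rather than asymptotic to it, and the upper bound is all your argument needs.
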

\begin{proof}
Write
\[
 n_t:=\floor{Nt},
 \qquad
 H_N(x):=\frac{A_N(x)-L(N)}{\ell(N)},\qquad x\ge0,
\]
where $A_N(0)=0$. From the definition of $k_{N,t}$, we have the
exact representation
\begin{equation}
 k_{N,t}(i)=
 \begin{cases}
 H_N\bigl((n_t-i)/N\bigr),
     &1\le i\le n_t,\\[1mm]
 H_N\bigl((n_t-i)/N\bigr)-H_N(-i/N),
     &i\le0,\\[1mm]
 0,&i>n_t.
 \end{cases}
 \label{eq:exact-kernel-representation}
\end{equation}

We first prove pointwise convergence. Let $0<y<t$ and put
$i_N=\floor{Ny}$. Then, for all sufficiently large $N$,
$1\le i_N\le n_t$, and
\[
 \frac{n_t-i_N}{N}\longrightarrow t-y.
\]
The locally uniform convergence in
Lemma~\ref{lem:secondexpansion} therefore gives
\[
 k_{N,t}(i_N)
 =
 H_N\left(\frac{n_t-i_N}{N}\right)
 \longrightarrow G_\lambda(t-y).
\]
If $y<0$, then $i_N\le0$ eventually, and
\[
 \frac{n_t-i_N}{N}\longrightarrow t-y,
 \qquad
 \frac{-i_N}{N}\longrightarrow-y.
\]
Applying Lemma~\ref{lem:secondexpansion} to both terms in
\eqref{eq:exact-kernel-representation}, we obtain
\[
 k_{N,t}(i_N)
 \longrightarrow
 G_\lambda(t-y)-G_\lambda(-y).
\]
If $y>t$, then $i_N>n_t$ eventually and $k_{N,t}(i_N)=0$.
Consequently,
\begin{equation}
 k_{N,t}(\floor{Ny})
 \longrightarrow K_\lambda(t,y)
 \label{eq:kernel-pointwise}
\end{equation}
for every $y\notin\{0,t\}$. The case $t=0$ is immediate because
both sides vanish identically.

We next establish uniform integrability. Since
$N\lambda_N\to\lambda<\infty$, there exists $C<\infty$ such that
$N\lambda_N\le C$. For $N^{-1}\le x\le T+1$, write
\[
 A_N(x)-L(N)
 =
 L(Nx)-L(N)
 +
 \sum_{j\le Nx}
 \bigl(e^{-\lambda_Nj}-1\bigr)\frac{\ell(j)}j.
\]
Let $\eta>0$. Potter's bound gives
\[
 \frac{|L(Nx)-L(N)|}{\ell(N)}
 \le C_{\eta,T}\bigl(1+x^{-\eta}\bigr).
\]
Furthermore,
\[
 |e^{-\lambda_Nj}-1|
 \le \lambda_Nj
 \le C\frac jN,
\]
and hence
\[
 \frac1{\ell(N)}
 \sum_{j\le Nx}
 |e^{-\lambda_Nj}-1|\frac{\ell(j)}j
 \le
 \frac C{N\ell(N)}\sum_{j\le Nx}\ell(j)
 \le C_{\eta,T}.
\]
Thus
\begin{equation}
 |H_N(x)|
 \le C_{\eta,T}\bigl(1+x^{-\eta}\bigr),
 \qquad N^{-1}\le x\le T+1.
 \label{eq:HN-near-zero}
\end{equation}

For the remote past, let $x\ge1$ and $0\le v\le T+1$. Positivity,
the bound $e^{-\lambda_Nj}\le1$, and Potter's inequality yield
\begin{align}
 \frac{|A_N(x+v)-A_N(x)|}{\ell(N)}
 &\le
 \frac1{\ell(N)}
 \sum_{Nx<j\le N(x+v)+1}\frac{\ell(j)}j \notag\\
 &\le
 C_{\eta,T}\int_x^{x+v+N^{-1}}u^{-1+\eta}\,du \notag\\
 &\le C_{\eta,T}(1+x)^{-1+\eta}.
 \label{eq:remote-kernel-bound}
\end{align}

The cells containing $0$ or $t$ require separate treatment because
the corresponding arguments of $H_N$ can be zero. Let $E_N(t)$ be
the union of these cells. Then
\[
 |E_N(t)|\le\frac4N.
\]
We claim that
\begin{equation}
 \sup_{i\in\mathbb Z}|k_{N,t}(i)|
 \le C_T\left(1+\frac{L(N)}{\ell(N)}\right).
 \label{eq:preliminary-max-bound}
\end{equation}
Indeed, if $1\le i\le n_t$, then
\[
 |k_{N,t}(i)|
 \le\frac{A_N(T)+L(N)}{\ell(N)}
 \le C_T\frac{L(N)}{\ell(N)}.
\]
For $i\le0$, put $m=-i$. If $m\le N$, the corresponding coefficient
is bounded by $A_N(T+1)/\ell(N)$, which is at most
$C_TL(N)/\ell(N)$. If $m>N$, Potter's inequality gives
\begin{align*}
 |k_{N,t}(i)|
 \le
 \frac1{\ell(N)}
 \sum_{m<j\le m+n_t}\frac{\ell(j)}j
 \le
 C_T\frac{N\ell(m)}{m\ell(N)}
 \le
 C_T\left(\frac mN\right)^{-1+\eta}
 \le C_T
\end{align*}
after choosing $\eta<1$. This proves
\eqref{eq:preliminary-max-bound}.

By Lemma~\ref{lem:integratedSV}, both $L$ and $\ell$ are slowly
varying, and hence
\[
 1+\frac{L(N)}{\ell(N)}=N^{o(1)}.
\]
It follows from \eqref{eq:preliminary-max-bound} that
\begin{equation}
 \int_{E_N(t)}
 |k_{N,t}(\floor{Ny})|^r\,dy
 \le
 \frac{C_T}{N}
 \left(1+\frac{L(N)}{\ell(N)}\right)^r
 \longrightarrow0.
 \label{eq:endpoint-cells}
\end{equation}

Now choose
\[
 0<\eta<
 \min\left\{\frac1r,\,1-\frac1r\right\}.
\]
Relations \eqref{eq:HN-near-zero} and
\eqref{eq:remote-kernel-bound} imply, outside $E_N(t)$,
\[
 |k_{N,t}(\floor{Ny})|
 \le C_{\eta,T}\,g_{\eta,t}(y),
\]
where one may take
\[
 g_{\eta,t}(y)
 :=
 \mathbf 1_{\{0<y<t\}}
       \bigl[1+(t-y)^{-\eta}\bigr]
 +\mathbf 1_{\{-1<y<0\}}
       \bigl[1+(-y)^{-\eta}\bigr]
 +\mathbf 1_{\{y\le-1\}}
       (1+|y|)^{-1+\eta}.
\]
Our choice of $\eta$ ensures that
\[
 g_{\eta,t}\in L^r(\mathbb R).
\]
The limiting kernel satisfies the same bound. Indeed,
\[
 G_\lambda'(x)=\frac{e^{-\lambda x}}x,
\]
so its singularities are logarithmic and its remote-past difference
is of order $|y|^{-1}$ when $\lambda=0$, and is even smaller when
$\lambda>0$.

Combining the pointwise convergence
\eqref{eq:kernel-pointwise}, dominated convergence outside
$E_N(t)$, and \eqref{eq:endpoint-cells}, we obtain
\[
 \int_{\mathbb R}
 \left|
 k_{N,t}(\floor{Ny})-K_\lambda(t,y)
 \right|^r\,dy
 \xrightarrow[N\to\infty]{}0.
\]

For a finite linear combination
\[
 \sum_{\nu=1}^m\theta_\nu
 k_{N,t_\nu}(\floor{Ny}),
\]
the exceptional set is the union of the cells containing
$0,t_1,\ldots,t_m$, and therefore still has measure $O(N^{-1})$.
The preceding majorants can be summed over the finitely many
$t_\nu$, so the same argument proves the asserted convergence for
every finite linear combination.

Finally, \eqref{eq:preliminary-max-bound} and slow variation give
\[
 \max_{i\in\mathbb Z}|k_{N,t}(i)|
 \le
 C_T\left(1+\frac{L(N)}{\ell(N)}\right)
 =N^{o(1)}.
\]
This proves \eqref{eq:kernelmax} and completes the proof.
\end{proof}

Since the second-order limit theorem only considers the effects of weak or moderate tempering, the following technical lemma used in the proof of Theorem \ref{thm:secondorder} is also restricted to the premise of condition \eqref{eq:secondregime}.

\begin{lemma}[Gaussian increment bound]
\label{lem:gaussiantight}
Suppose that $\alpha=2$ and
$\mathbb E|\varepsilon_0|^{2+\delta}<\infty$ for some $\delta>0$.
Let $\widetilde R_N$ denote the polygonal interpolation of $R_N$.
Choose
\[
 2<p\le2+\delta
\]
and then choose
\[
 0<\vartheta<\frac12-\frac1p.
\]
There exists a constant $C_{T,p,\vartheta}$ such that, uniformly in
$N$ and $s,t\in[0,T]$,
\begin{equation}
 \mathbb E
 |\widetilde R_N(t)-\widetilde R_N(s)|^p
 \le
 C_{T,p,\vartheta}
 \bigl(|t-s|+N^{-1}\bigr)^{p(1/2-\vartheta)}.
 \label{eq:Gaussianincrement-corrected}
\end{equation}
Moreover, the maximal polygonal interpolation error converges to
zero in probability.
\end{lemma}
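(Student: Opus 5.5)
The bound follows from a Rosenthal-type moment inequality applied to the weighted-sum representation of the remainder. By \eqref{eq:discretekernel},
\[
 R_N(t)=\frac{1}{B_N}\sum_{i\in\Z}k_{N,t}(i)\varepsilon_i .
\]
Since $\E\varepsilon_0^2<\infty$ in this setting, $V$ converges to $\sigma^2$ and $B_N\sim\sigma\sqrt N$. Hence for grid points $s=n_1/N<t=n_2/N$,
\[
 R_N(t)-R_N(s)=\frac{1}{B_N}\sum_i d_{N}(i)\varepsilon_i,\qquad d_N(i):=k_{N,t}(i)-k_{N,s}(i).
\]
Rosenthal's inequality for independent centered summands with $2<p\le2+\delta$ gives
\[
 \E|R_N(t)-R_N(s)|^p\le C_p\Bigl[\Bigl(\frac1N\sum_i d_N(i)^2\Bigr)^{p/2}+N^{-p/2}\sum_i|d_N(i)|^p\Bigr].
\]
The second term is bounded by $N^{-p/2}\max_i|d_N(i)|^{p-2}\sum_i d_N(i)^2$, so everything reduces to estimating $\frac1N\sum_i d_N(i)^2$ together with \eqref{eq:kernelmax}.

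\textbf{Key step: the $L^2$ estimate of kernel increments.} We want
\[
 \frac1N\sum_i d_N(i)^2\le C_{T,\vartheta}\,(t-s)^{1-2\vartheta'}
\]
for some small $\vartheta'$. We split the indices $i$ into three ranges, using the exact representation \eqref{eq:exact-kernel-representation}, $d_N(i)=H_N((n_2-i)/N)-H_N((n_1-i)/N)$ (with the obvious modification on $(n_1,n_2]$ and for $i\le0$).
\begin{enumerate}
\item[(a)] On the new block $n_1<i\le n_2$, $d_N(i)=H_N((n_2-i)/N)$. By \eqref{eq:HN-near-zero}, $|H_N(x)|\le C(1+x^{-\eta})$, so this block contributes at most $C(t-s)$ up to a factor $(t-s)^{-2\eta}$ coming from the singularity.
\item[(b)] For $i\le n_1$ with $n_1-i\le (t-s)N$, each term is bounded by $C(1+((n_1-i)/N)^{-\eta})$, giving the same order.
\item[(c)] For $n_1-i>(t-s)N$, we use the increment representation $H_N(x+v)-H_N(x)=\ell(N)^{-1}\sum_{Nx<j\le N(x+v)}b_{N,j}$. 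Positivity and Potter bounds, as in \eqref{eq:remote-kernel-bound}, give $|d_N(i)|\le C\,v\,x^{-1+\eta}$ for $x\le 1$ and $C\,v\,(1+x)^{-1+\eta}$ for $x\ge1$, where $v=t-s$ and $x=(n_1-i)/N$. Squaring and integrating over $x>v$ yields $Cv^2\cdot v^{-1+2\eta}=Cv^{1+2\eta}\le Cv^{1-2\eta}$ for $v\le T$.
\end{enumerate}
Choosing $\eta$ small relative to $\vartheta$, the first Rosenthal term is at most $C(t-s)^{p(1/2-\vartheta)}$. For the second term, \eqref{eq:kernelmax} gives $\max_i|d_N(i)|=N^{o(1)}$, so it is bounded by $N^{-(p-2)/2+o(1)}(t-s)^{1-2\eta}$. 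When $t-s\ge N^{-1}$, this is at most $(t-s)^{(p-2)/2-o(1)}(t-s)^{1-2\eta}$, which is dominated by $C(t-s)^{p(1/2-\vartheta)}$ after absorbing the $N^{o(1)}$ through the slack $\vartheta>0$. This absorption is the main obstacle: one must check that the $o(1)$ exponent, which comes from $L(N)/\ell(N)$, is beaten by a power of $N^{-\vartheta}$. For general $s,t$, polygonal interpolation and the trivial bound on a single cell give the $|t-s|+N^{-1}$ form.

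\textbf{The interpolation error.} The maximal interpolation error is bounded by the maximum over the roughly $NT$ cells of $|R_N((k+1)/N)-R_N(k/N)|$. By the moment bound just established and a union bound,
\[
 \Pp\Bigl(\max_k|\Delta_k|>\epsilon\Bigr)\le NT\,\epsilon^{-p}\,C\,N^{-p(1/2-\vartheta)},
\]
which tends to zero because $p(1/2-\vartheta)>1$, exactly by the choice $\vartheta<1/2-1/p$.
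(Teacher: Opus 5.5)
Your proposal is correct and follows essentially the same route as the paper. Both arguments apply Rosenthal's inequality to the linear representation of grid increments, prove the three-range square-sum bound $\frac1N\sum_i d_N(i)^2\le C(h+N^{-1})^{1-2\eta}$, absorb the maximal coefficient via $L(N)/\ell(N)\le C_\kappa N^{\kappa}$ with $(p-2)\kappa+2\eta\le p\vartheta$, extend to arbitrary $s,t$ by polygonal interpolation, and control the interpolation error with a union bound. One slip to fix: in range (c) with $v<x\le1$, Potter's bound gives $|d_N(i)|\le Cv\,x^{-1-\eta}$, not $Cv\,x^{-1+\eta}$ (the latter already fails for $\ell\equiv1$); the correct bound integrates to exactly the $Cv^{1-2\eta}$ you state, and the cells where the argument of $H_N$ is $0$ need the same separate $N^{o(1)}$ absorption the paper uses.
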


\begin{proof}
Since $\mathbb E|\varepsilon_0|^{2+\delta}<\infty$, we have
\[
 \sigma_\varepsilon^2:=\mathbb E\varepsilon_0^2<\infty.
\]
Consequently, the Gaussian domain-of-attraction normalization
satisfies
\begin{equation}
 B_N^2\sim N\sigma_\varepsilon^2.
 \label{eq:BN-finite-variance}
\end{equation}

We first consider two grid points. Let $0\le a<b\le\floor{NT}$,
put
\[
 q:=b-a,\qquad h:=\frac qN,
\]
and define
\[
 D_{N,a,b}
 :=
 R_N(b/N)-R_N(a/N).
\]
Using the exact linear representation of $R_N$ and shifting the
innovation index by $a$, we obtain
\begin{equation}
 D_{N,a,b}
 =
 \frac1{B_N}\sum_{j\in\mathbb Z}d_{N,q}(j)\varepsilon_{a+j},
 \label{eq:grid-increment-representation}
\end{equation}
where
\[
 d_{N,q}(j)
 :=
 \frac1{\ell(N)}
 \left[
 \sum_{n=1}^{q}
 b_{N,n-j}\mathbf 1_{\{n-j\ge1\}}
 -
 L(N)\mathbf 1_{\{1\le j\le q\}}
 \right].
\]
Thus $d_{N,q}(j)=k_{N,q/N}(j)$.

We next derive a deterministic square-sum estimate. Fix
$0<\eta_0<1/2$. Potter's inequality and the estimates used in the
proof of Lemma~\ref{lem:kernelconv} imply
\begin{equation}
 \frac1N\sum_{j\in\mathbb Z}|d_{N,q}(j)|^2
 \le
 C_{\eta_0,T}
 \left(h+\frac1N\right)^{1-2\eta_0}.
 \label{eq:coefficient-square-bound}
\end{equation}
For completeness, we verify the different ranges.

If $1\le j\le q$, put $u_j=(q-j)/N$. Then
\[
 |d_{N,q}(j)|
 \le
 C_{\eta_0,T}
 \left(u_j+\frac1N\right)^{-\eta_0}.
\]
The possible value $L(N)/\ell(N)$ at $u_j=0$ is absorbed because
$L(N)/\ell(N)=N^{o(1)}$. Therefore,
\begin{align*}
 \frac1N\sum_{j=1}^{q}|d_{N,q}(j)|^2
 \le
 \frac C N\sum_{k=0}^{q-1}
 \left(\frac{k+1}{N}\right)^{-2\eta_0}
 &\le
 C\left(h+\frac1N\right)^{1-2\eta_0}.
\end{align*}

For $j\le0$, put $x=-j/N$. If
$0\le x\le2(h+N^{-1})$, the same near-zero estimate gives
\[
 |d_{N,q}(j)|
 \le C_{\eta_0,T}(x+N^{-1})^{-\eta_0},
\]
whose squared sum is bounded by the right-hand side of
\eqref{eq:coefficient-square-bound}.

If $2(h+N^{-1})<x\le1$, the coefficient is a cumulative sum over an
interval of length $h+O(N^{-1})$. Potter's bound gives
\[
 |d_{N,q}(j)|
 \le
 C_{\eta_0,T}
 \left(h+\frac1N\right)x^{-1-\eta_0}.
\]
Hence
\begin{align*}
 \frac1N
 \sum_{\substack{j\le0\\
  2(h+N^{-1})<-j/N\le1}}
 |d_{N,q}(j)|^2
 \le
 C\left(h+\frac1N\right)^2
 \int_{2(h+N^{-1})}^{1}x^{-2-2\eta_0}\,dx
 \le
 C\left(h+\frac1N\right)^{1-2\eta_0}.
\end{align*}

Finally, if $x\ge1$, another application of Potter's bound yields
\[
 |d_{N,q}(j)|
 \le
 C_{\eta_0,T}
 \left(h+\frac1N\right)x^{-1+\eta_0}.
\]
Because $\eta_0<1/2$,
\[
 \int_1^\infty x^{-2+2\eta_0}\,dx<\infty,
\]
and this range contributes at most
$C(h+N^{-1})^2$. This proves
\eqref{eq:coefficient-square-bound}.

We next verify directly that the maximal-coefficient estimate is
uniform over all integers $0\leq q\leq\lfloor NT\rfloor$.  Recall that
\[
 d_{N,q}(j)
 =
 \frac1{\ell(N)}
 \left\{
   \sum_{n=1}^q
       b_{N,n-j}\mathbf 1_{\{n-j\geq1\}}
   -
   L(N)\mathbf 1_{\{1\leq j\leq q\}}
 \right\}.
\]
We claim that
\[
 \sup_{0\leq q\leq NT}
 \sup_{j\in\mathbb Z}|d_{N,q}(j)|
 \leq
 C_T\left(1+\frac{L(N)}{\ell(N)}\right).
\]

First suppose that $1\leq j\leq q$.  Then
\[
 d_{N,q}(j)
 =
 \frac{
  A_N((q-j)/N)-L(N)
 }{\ell(N)}.
\]
Because $0\leq(q-j)/N\leq T$ and
\[
 \sup_{0\leq x\leq T}A_N(x)\leq C_TL(N),
\]
we obtain
\[
 |d_{N,q}(j)|
 \leq
 C_T\frac{L(N)}{\ell(N)}
\]
uniformly in $q$ and $j$.  If $j>q$, both terms in the definition of
$d_{N,q}(j)$ vanish, so that
\[
 d_{N,q}(j)=0.
\]

It remains to consider $j\leq0$.  Put $m=-j\geq0$.  A change of index
gives
\[
 d_{N,q}(-m)
 =
 \frac1{\ell(N)}
 \sum_{r=m+1}^{m+q}b_{N,r}.
\]
If $0\leq m\leq N$, then $m+q\leq N(T+1)$, and hence
\[
 |d_{N,q}(-m)|
 \leq
 \frac{A_N(T+1)}{\ell(N)}
 \leq
 C_T\frac{L(N)}{\ell(N)}.
\]

Now suppose that $m>N$.  Since $e^{-\lambda_Nr}\leq1$,
\[
 \sum_{r=m+1}^{m+q}b_{N,r}
 \leq
 \sum_{r=m+1}^{m+q}\frac{\ell(r)}r.
\]
For $m<r\leq m+q$,
\[
 1\leq\frac rm
 \leq
 1+\frac{NT}{m}
 \leq1+T.
\]
The uniform convergence theorem for slowly varying functions therefore
gives
\[
 \sup_{m<r\leq m+q}\frac{\ell(r)}{\ell(m)}
 \leq C_T
\]
for all sufficiently large $N$, uniformly in $m>N$ and
$q\leq NT$.  Consequently,
\[
 \sum_{r=m+1}^{m+q}b_{N,r}
 \leq
 C_T\frac{q\ell(m)}m
 \leq
 C_T\frac{N\ell(m)}m.
\]
Choose any $\rho\in(0,1)$.  Potter's bound gives, uniformly for
$m\geq N$,
\[
 \frac{\ell(m)}{\ell(N)}
 \leq
 C_\rho\left(\frac mN\right)^\rho.
\]
It follows that
\[
 |d_{N,q}(-m)|
 \leq
 C_{T,\rho}
 \left(\frac mN\right)^{-1+\rho}
 \leq C_{T,\rho}.
\]
All constants in the preceding estimates are independent of
$q\leq NT$.  Hence
\[
 \sup_{0\leq q\leq NT}
 \sup_{j\in\mathbb Z}|d_{N,q}(j)|
 \leq
 C_T\left(1+\frac{L(N)}{\ell(N)}\right).
\]
Since both $L$ and $\ell$ are slowly varying,
$L(N)/\ell(N)$ is slowly varying.  Therefore, for every $\kappa>0$,
\[
 \frac{L(N)}{\ell(N)}=o(N^\kappa),
\]
and thus
\begin{equation}
 \sup_{0\leq q\leq NT}
 \sup_{j\in\mathbb Z}|d_{N,q}(j)|
 \leq C_{\kappa,T}N^\kappa
 \label{eq:increment-max-bound}
\end{equation}
for all sufficiently large $N$.  This proves the maximal-coefficient
bound with the uniformity in $q$ required below.

Apply Rosenthal's inequality to
\eqref{eq:grid-increment-representation}. Using
\eqref{eq:BN-finite-variance}, we obtain
\begin{align}
 \mathbb E|D_{N,a,b}|^p
 &\le
 C_p\left[
 \left(
 \frac1{B_N^2}\sum_jd_{N,q}(j)^2
 \right)^{p/2}
 +
 \frac{\mathbb E|\varepsilon_0|^p}{B_N^p}
 \sum_j|d_{N,q}(j)|^p
 \right].
 \label{eq:Rosenthal-grid}
\end{align}
The first term is bounded, by
\eqref{eq:coefficient-square-bound}, by
\[
 C\left(h+\frac1N\right)^{p(1/2-\eta_0)}.
\]
For the second term, use
\[
 \sum_j|d_{N,q}(j)|^p
 \le
 \left(\max_j|d_{N,q}(j)|\right)^{p-2}
 \sum_jd_{N,q}(j)^2.
\]
Equations \eqref{eq:BN-finite-variance},
\eqref{eq:coefficient-square-bound}, and
\eqref{eq:increment-max-bound} then give
\begin{align*}
 \frac1{B_N^p}\sum_j|d_{N,q}(j)|^p
 \le
 C N^{-(p-2)(1/2-\kappa)}
 \left(h+\frac1N\right)^{1-2\eta_0}
 \le
 C\left(h+\frac1N\right)^{
 p/2-(p-2)\kappa-2\eta_0},
\end{align*}
where in the last step we used $h+N^{-1}\ge N^{-1}$.

Choose $\eta_0>0$ and $\kappa>0$ sufficiently small that
\[
 \eta_0\le\vartheta,
 \qquad
 (p-2)\kappa+2\eta_0\le p\vartheta.
\]
Both terms in \eqref{eq:Rosenthal-grid} are then bounded by
\[
 C\left(h+\frac1N\right)^{p(1/2-\vartheta)}.
\]
This proves \eqref{eq:Gaussianincrement-corrected} at grid points.

For arbitrary $s,t\in[0,T]$, the polygonal increment is a linear
combination of at most three neighboring grid increments, and their
total time length is bounded by
\[
 |t-s|+\frac2N.
\]
The elementary inequality
$|x_1+x_2+x_3|^p\le3^{p-1}\sum_{\nu=1}^3|x_\nu|^p$
therefore extends the grid estimate to all $s,t\in[0,T]$.

It remains to control the interpolation error. Let
\[
 \Delta_{N,n}
 :=
 R_N(n/N)-R_N((n-1)/N).
\]
Taking $q=1$ in the preceding estimate gives
\[
 \sup_{1\le n\le NT}
 \mathbb E|\Delta_{N,n}|^p
 \le C N^{-p(1/2-\vartheta)}.
\]
Since the maximal difference between the step process and its
polygonal interpolation is bounded by
$\max_{n\le NT}|\Delta_{N,n}|$, the union bound yields, for every
$\epsilon>0$,
\begin{align*}
 \mathbb P\left(
 \max_{n\le NT}|\Delta_{N,n}|>\epsilon
 \right)
 \le
 \epsilon^{-p}\sum_{n\le NT}
 \mathbb E|\Delta_{N,n}|^p
 \le
 C_{\epsilon,T}
 N^{\,1-p(1/2-\vartheta)}
 \longrightarrow0.
\end{align*}
The last convergence follows from
$\vartheta<1/2-1/p$. This proves the interpolation assertion.
\end{proof}

\section{Proof of the first-order limit theorem}
\label{sec:firstproof}

\subsection{Finite-dimensional distributions}

We first prove \eqref{eq:firstfdd}.  Fix
\[
 0=t_0<t_1<\cdots<t_m\le T
\]
and $v_1,\ldots,v_m\in\R$.  It is enough to prove convergence of
\[
 \sum_{k=1}^m v_k
 \left\{\frac{S_N(t_k)-S_N(t_{k-1})}{B_NQ_N}\right\}.
\]
Using the block coefficients in Lemma \ref{lem:blockcoeff}, this random
variable can be written as
\[
 \frac1{B_N}\sum_{j\in\Z}w_N(j)\varepsilon_j,
 \qquad
 w_N(j):=\frac1{Q_N}\sum_{k=1}^m v_kC_{N,k}(j).
\]
The desired limiting coefficient is
\[
 w(y)=\sum_{k=1}^m v_k\1_{(t_{k-1},t_k]}(y).
\]

\begin{proposition}
\label{prop:firstkernel}
For every $p>1$,
\begin{equation}
 \frac1N\sum_{j\in\Z}|w_N(j)-w(j/N)|^p
 \longrightarrow0, \qquad N\to\infty.
 \label{eq:firstLp}
\end{equation}
\end{proposition}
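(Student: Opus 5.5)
The plan is to split $\Z$ into four regions and control the $p$th power of $w_N(j)-w(j/N)$ on each. The regions are: the interior of each block, the $\delta N$-strips at the block endpoints, the recent past $-\delta N\le j\le 0$ together with $j>\floor{NT}$, and the remote past $j<-\delta N$. Since $w_N$ is a finite linear combination of the $C_{N,k}/Q_N$, and $|x_1+\cdots+x_m|^p\le m^{p-1}\sum|x_\nu|^p$, it is enough to treat each block coefficient $C_{N,k}(j)/Q_N$ separately against $\1_{(t_{k-1},t_k]}(j/N)$.

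\textbf{Interior of the blocks.} Fix $\delta>0$ and take $j$ in the interior of block $i$, that is, $\floor{N(t_{i-1}+\delta)}\le j\le\floor{N(t_i-\delta)}$.
\begin{itemize}
\item For $k=i$, \eqref{eq:mainblock} gives $\sup|C_{N,i}(j)/Q_N-1|\to0$.
\item For $k>i$, \eqref{eq:crossblock} gives $\sup C_{N,k}(j)/Q_N\to0$.
\item For $k<i$, $C_{N,k}(j)=0$, because every $n\le\floor{Nt_k}<j$ has $n-j<1$, so $b_{N,n-j}$ is excluded.
\end{itemize}
These are uniform bounds over at most $TN$ indices, so the interior contributes $o(1)$ after multiplying by $1/N$.

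\textbf{Endpoint strips.} Indices within distance $\delta N$ of some $t_k$ contribute at most $C\delta$; this is the last assertion of Lemma~\ref{lem:blockcoeff}. The bound rests on $0\le C_{N,k}(j)\le A_N(T)$ and $A_N(T)/Q_N\le C_T$ by Lemma~\ref{lem:temperedcumulative}, together with $|w|\le\max|v_k|$. The same uniform bound handles $-\delta N\le j\le 0$, where $w(j/N)=0$ and $|w_N(j)|\le C$, so this range contributes $O(\delta)$. For $j>\floor{Nt_m}$ both $w_N(j)$ and $w(j/N)$ vanish.

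\textbf{Remote past.} For $j=-m<-\delta N$ we have $w(j/N)=0$. Since $b_{N,r}\ge0$, $C_{N,k}(-m)\le D_N(m,T)$, and Lemma~\ref{lem:remotepast} gives
\[
 \frac1N\sum_{m\ge0}\bigl(D_N(m,T)/Q_N\bigr)^p\to0 .
\]
This handles the whole past in one step; in fact it also covers the recent past, since the lemma sums over all $m\ge0$.

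Combining the four regions gives $\limsup_N\le C\delta$, and letting $\delta\downarrow0$ proves \eqref{eq:firstLp}.

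The main obstacle is the uniformity needed in the interior. Lemma~\ref{lem:blockcoeff} relies on $A_N(x)/Q_N\to1$ holding uniformly for $x$ in compact subsets of $(0,\infty)$, whereas Lemma~\ref{lem:temperedcumulative} states it only pointwise. I would get uniformity from monotonicity: $x\mapsto A_N(x)$ is nondecreasing because $b_{N,j}\ge0$. Pointwise convergence to the constant $1$ on a finite grid in $[\delta,T]$ then gives a sandwich bound on the whole compact interval. Beyond that, checking the interior and strip cases only requires tracking the floor-function offsets.
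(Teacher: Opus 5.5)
Your proposal is correct and follows essentially the same route as the paper. The interior of each block is handled by \eqref{eq:mainblock}--\eqref{eq:crossblock}, the $\delta N$-strips by the bound $A_N(T)/Q_N\le C_T$, and all negative indices at once by $C_{N,k}(-m)\le D_N(m,T)$ with Lemma~\ref{lem:remotepast}, before letting $\delta\downarrow0$. Your monotonicity sandwich is also sound, and it supplies the uniformity that Lemma~\ref{lem:blockcoeff} asserts without justification: because $x\mapsto A_N(x)$ is nondecreasing and the limit is the constant $1$, convergence at the two endpoints of a compact interval already gives uniform convergence on it.
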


\begin{proof}
Fix
\[
0<\delta<\frac13\min_{1\le k\le m}(t_k-t_{k-1}).
\]
We divide the sum in \eqref{eq:firstLp} into the contributions from the interiors of the blocks, the neighborhoods of the block endpoints, and the negative indices.

For $k=1,\ldots,m$, define the interior index set
\[
I_{N,k}^{\delta}
=
\left\{
j\in\mathbb Z:
\left\lfloor N(t_{k-1}+\delta)\right\rfloor
\le j\le
\left\lfloor N(t_k-\delta)\right\rfloor
\right\}.
\]
If $j\in I_{N,k}^{\delta}$, then $j/N\in(t_{k-1},t_k]$, and hence
\[
w(j/N)=v_k.
\]
Moreover, the innovation $\varepsilon_j$ cannot contribute to any block preceding the $k$-th block. Therefore,
\[
w_N(j)
=
\frac1{Q_N}
\left(
v_kC_{N,k}(j)
+\sum_{i=k+1}^{m}v_iC_{N,i}(j)
\right).
\]
Consequently,
\begin{equation}
w_N(j)-w(j/N)
=
v_k\left(\frac{C_{N,k}(j)}{Q_N}-1\right)
+\sum_{i=k+1}^{m}v_i\frac{C_{N,i}(j)}{Q_N}.
 \label{eq:3.4}
\end{equation}

Set
\[
\rho_N(\delta)
:=
\max_{1\le k\le m}
\sup_{j\in I_{N,k}^{\delta}}
\left[
\left|\frac{C_{N,k}(j)}{Q_N}-1\right|
+\sum_{i=k+1}^{m}\frac{C_{N,i}(j)}{Q_N}
\right].
\]
By \eqref{eq:mainblock}, uniformly for $j\in I_{N,k}^{\delta}$,
\[
\frac{C_{N,k}(j)}{Q_N}\longrightarrow1,
\qquad N\to\infty,
\]
while \eqref{eq:crossblock} gives, for every $i>k$,
\[
\sup_{j\in I_{N,k}^{\delta}}
\frac{C_{N,i}(j)}{Q_N}
\longrightarrow0,
\qquad N\to\infty.
\]
Since there are only finitely many blocks,
\[
\rho_N(\delta)\longrightarrow0,
\qquad N\to\infty.
\]
It follows from \eqref{eq:3.4} that
\[
\sup_{j\in I_{N,k}^{\delta}}
|w_N(j)-w(j/N)|
\le
\left(\sum_{i=1}^{m}|v_i|\right)\rho_N(\delta).
\]
Hence
\begin{equation}
\begin{split}
\frac1N\sum_{k=1}^{m}
\sum_{j\in I_{N,k}^{\delta}}
|w_N(j)-w(j/N)|^p
&\le
\frac1N\sum_{k=1}^{m}\big|I_{N,k}^{\delta}\big|
\left(\sum_{i=1}^{m}|v_i|\right)^p
\rho_N(\delta)^p \\
&\le
C_{T,\bm v,p}\rho_N(\delta)^p
\longrightarrow0,
\qquad N\to\infty.
\end{split}
 \label{eq:3.6}
\end{equation}

We next consider the indices close to the block endpoints. Let
\[
E_N^\delta
=
\left\{
0\le j\le\left\lfloor Nt_m\right\rfloor:
\min_{0\le k\le m}
\big|j-\left\lfloor Nt_k\right\rfloor\big|
\le \delta N+2
\right\}.
\]
Since there are $m+1$ endpoints,
\[
\big|E_N^\delta\big|
\le 2(m+1)(\delta N+2)+m+1,
\]
and therefore
\begin{equation}
\frac{\big|E_N^\delta\big|}{N}
\le C_m\delta+O(N^{-1}).
 \label{eq:3.7}
\end{equation}

Because the filter coefficients are nonnegative,
\[
0\le C_{N,k}(j)
\le \sum_{r=1}^{\lfloor NT\rfloor}b_{N,r}
=:A_N(T).
\]
The uniform cumulative-coefficient estimate gives
\[
\sup_N\frac{A_N(T)}{Q_N}<\infty.
\]
Thus
\[
|w_N(j)|
\le
\frac1{Q_N}\sum_{k=1}^{m}|v_k|C_{N,k}(j)
\le C_T\sum_{k=1}^{m}|v_k|,
\]
uniformly in $N$ and $j\ge0$. Also,
\[
|w(j/N)|\le\max_{1\le k\le m}|v_k|.
\]
It follows that
\[
|w_N(j)-w(j/N)|^p\le C_{\bm v,p,T},
\qquad j\in E_N^\delta.
\]
Combining this bound with \eqref{eq:3.7}, we obtain
\begin{equation}
\limsup_{N\to\infty}
\frac1N\sum_{j\in E_N^\delta}
|w_N(j)-w(j/N)|^p
\le C_{\bm v,p,T}\delta.
 \label{eq:3.8}
\end{equation}

Notice that every nonnegative index $j\le\left\lfloor Nt_m\right\rfloor$ belongs either to one of the sets $I_{N,k}^{\delta}$ or to $E_N^\delta$, for all sufficiently large $N$. If $j>\left\lfloor Nt_m\right\rfloor$, then
\[
C_{N,k}(j)=0,\qquad k=1,\ldots,m,
\]
and hence
\[
w_N(j)=w(j/N)=0.
\]
Therefore, \eqref{eq:3.6} and \eqref{eq:3.8} imply
\begin{equation}
\limsup_{N\to\infty}
\frac1N\sum_{j\ge0}
|w_N(j)-w(j/N)|^p
\le C_{\bm v,p,T}\delta.
 \label{eq:3.9}
\end{equation}

It remains to consider $j<0$. Write $j=-q$, where $q\ge1$. Since $w(-q/N)=0$, we only need to estimate $w_N(-q)$. The blocks
\[
\big(\left\lfloor Nt_{k-1}\right\rfloor,\left\lfloor Nt_k\right\rfloor\big],
\qquad k=1,\ldots,m,
\]
partition the interval $\{1,\ldots,\left\lfloor Nt_m\right\rfloor\}$. Hence
\begin{align*}
\sum_{k=1}^{m}C_{N,k}(-q)
=
\sum_{k=1}^{m}
\sum_{n=\left\lfloor Nt_{k-1}\right\rfloor+1}^{\left\lfloor Nt_k\right\rfloor}
b_{N,n+q} 
=
\sum_{n=1}^{\left\lfloor Nt_m\right\rfloor}b_{N,n+q}
=
D_N(q,t_m)
\le D_N(q,T).
\end{align*}
Using the nonnegativity of $C_{N,k}(-q)$, we obtain
\begin{align*}
|w_N(-q)|
\le
\frac1{Q_N}
\sum_{k=1}^{m}|v_k|C_{N,k}(-q) \le
\max_{1\le k\le m}|v_k|\cdot
\frac{D_N(q,T)}{Q_N}.
\end{align*}
Therefore, Lemma \ref{lem:remotepast} yields
\begin{equation}
\begin{split}
\frac1N\sum_{j<0}|w_N(j)-w(j/N)|^p
&=
\frac1N\sum_{q=1}^{\infty}|w_N(-q)|^p \\
&\le
C_{\bm v,p}
\frac1N\sum_{q=1}^{\infty}
\left(\frac{D_N(q,T)}{Q_N}\right)^p
\longrightarrow0,
\qquad N\to\infty.
\end{split}
 \label{eq:3.10}
\end{equation}

Combining \eqref{eq:3.9} and \eqref{eq:3.10}, we conclude that
\[
\limsup_{N\to\infty}
\frac1N\sum_{j\in\mathbb Z}
|w_N(j)-w(j/N)|^p
\le C_{\bm v,p,T}\delta.
\]
Finally, letting $\delta\downarrow0$ proves \eqref{eq:firstLp}.
\end{proof}

Choose $\eta>0$ sufficiently small that
$1<\alpha-\eta<\alpha+\eta$.  Proposition~3.1, applied with
$p=\alpha$, $p=\alpha-\eta$, and $p=\alpha+\eta$, implies
\[
 \int_{\mathbb R}|w_N(y)-w(y)|^\alpha\,dy\longrightarrow0
\]
and
\[
 \sup_N\int_{\mathbb R}
 \left(
   |w_N(y)|^{\alpha-\eta}
   +
   |w_N(y)|^{\alpha+\eta}
 \right)dy<\infty.
\]
Moreover, $\max_j|w_N(j)|\leq C$, whereas $B_N\to\infty$.  Hence all
the assumptions of Lemma \ref{lem:weightedarray} are satisfied.  Applying that lemma
gives
\[
 \frac1{B_N}\sum_{j\in\mathbb Z}w_N(j)\varepsilon_j
 \xrightarrow{d}
 \int_{\mathbb R}w(y)\,M_\alpha(dy)
 =
 \sum_{k=1}^m
 v_k\{Z_\alpha(t_k)-Z_\alpha(t_{k-1})\}.
\]
This proves \eqref{eq:firstfdd}.

\subsection{Large innovations and the \(M_1\) cluster map}

We now assume that $1<\alpha<2$.  Fix $T<\infty$ and choose the
sequence $m_N$ in Lemma \ref{lem:compressedcluster} for the time horizon $2T$.  Thus
\[
 m_N\longrightarrow\infty,
 \qquad
 \frac{m_N}{N}\longrightarrow0,
\]
and
\[
 \frac1{Q_N}\sum_{r=1}^{m_N}b_{N,r}\longrightarrow1,
 \qquad
 \frac1{Q_N}\sum_{m_N<r\leq2NT}b_{N,r}\longrightarrow0.
\]

We first record the maximal inequality used below.  Let
$\{\zeta_i\}$ be independent centered random variables, let
$1<p\leq2$, and let $a_{n,i}\geq0$ be deterministic.  Then
\begin{align} \label{eq:3.11}
 \E\max_{1\leq k\leq M}
 \left|
   \sum_i\sum_{n=1}^ka_{n,i}\zeta_i
 \right|^p
 \leq
 C_p\sum_i
 \left(\sum_{n=1}^Ma_{n,i}\right)^p
 \E|\zeta_i|^p.
\end{align}
Here we assume that $\E|\zeta_i|^p<\infty$ for every $i$ and that
the right-hand side of \eqref{eq:3.11} is finite.  The one-sided
demimartingale maximal inequality is first applied to the partial sums
\[
 S_k=\sum_i\sum_{n=1}^k a_{n,i}\zeta_i.
\]
Since the variables $-\zeta_i$ are again independent and the
coefficients $a_{n,i}$ are nonnegative, the same argument applies to
$-S_k$.  Combining the two one-sided estimates gives the estimate for
$\max_{k\leq M}|S_k|$.  The von Bahr--Esseen inequality is then
applied to $S_M$.

Indeed, the random variables
\[
 X_n:=\sum_i a_{n,i}\zeta_i
\]
are associated, because independent random variables are associated
and nonnegative linear transformations preserve association.
Therefore their partial sums form a demimartingale.  The
demimartingale maximal inequality \cite{Christofides2000}, followed by the von Bahr--Esseen
inequality applied to
\[
 \sum_{n=1}^MX_n
 =
 \sum_i\left(\sum_{n=1}^Ma_{n,i}\right)\zeta_i,
\]
gives \eqref{eq:3.11}.  Infinite sums follow by truncation and
Fatou's lemma.

Define the innovation point process
\[
 \mathcal N_N
 :=
 \sum_{j\in\mathbb Z}
 \delta_{(j/N,\varepsilon_j/B_N)}.
\]
On every compact time interval and away from the mark zero,
\[
 \mathcal N_N\Rightarrow\mathcal N,
\]
where $\mathcal N$ is a Poisson random measure with intensity
$ds\,\nu_\alpha(dx)$ and
\[
 \nu_\alpha(dx)
 =
 \alpha
 \left\{
 p_+x^{-\alpha-1}\mathbf 1_{\{x>0\}}
 +
 p_-|x|^{-\alpha-1}\mathbf 1_{\{x<0\}}
 \right\}dx.
\]

For $s=j/N$, the normalized response to a unit innovation at time
$s$ is
\[
 F_{N,s}(t)
 :=
 \frac1{Q_N}
 \sum_{r=1}^{\lfloor Nt\rfloor-j}
 b_{N,r}\mathbf 1_{\{t>s\}},
 \qquad 0\leq t\leq T.
\]
For $s\in(0,T)$, Lemma \ref{lem:compressedcluster} implies that the response accumulates
asymptotically all its mass during the interval
$[s,s+m_N/N]$.  More precisely, outside this interval the missing
coefficient mass is $o(1)$, uniformly for $s$ in compact subsets of
$(0,T)$.  Because $b_{N,r}\geq0$, $F_{N,s}$ is nondecreasing and its
completed graph can be parametrized by traversing its horizontal and
vertical segments in their natural order.  Compressing the interval
$[s,s+m_N/N]$ to the single time point $s$ gives
\[
 F_{N,s}\longrightarrow\mathbf 1_{[s,T]}
 \quad\text{in }(D[0,T],M_1),
\]
locally uniformly for $s$ bounded away from $0$ and $T$.

For $\eta>0$, define
\begin{align*}
 \varepsilon_{N,j}^{(>\eta)}
 &:=
 \varepsilon_j\mathbf 1_{\{|\varepsilon_j|>\eta B_N\}}
 -
 \E\left[
  \varepsilon_0\mathbf 1_{\{|\varepsilon_0|>\eta B_N\}}
 \right],\\
 \varepsilon_{N,j}^{(\leq\eta)}
 &:=
 \varepsilon_j\mathbf 1_{\{|\varepsilon_j|\leq\eta B_N\}}
 -
 \E\left[
  \varepsilon_0\mathbf 1_{\{|\varepsilon_0|\leq\eta B_N\}}
 \right].
\end{align*}
Since $\E\varepsilon_0=0$,
\[
 \varepsilon_j
 =
 \varepsilon_{N,j}^{(>\eta)}
 +
 \varepsilon_{N,j}^{(\leq\eta)}.
\]
Let
\[
 S_N^{(>\eta)}(t)
 :=
 \sum_{n=1}^{\lfloor Nt\rfloor}
 \sum_{r\geq1}
 b_{N,r}\varepsilon_{N,n-r}^{(>\eta)}.
\]

For fixed $\eta$, the limiting Poisson random measure has only
finitely many atoms with $|x|>\eta$ on bounded time intervals.
With probability tending to one, their times are distinct and stay
away from the endpoints.  The corresponding coefficient clusters
are then disjoint for sufficiently large $N$.  Every cluster generated
by a positive innovation is nondecreasing, while every cluster
generated by a negative innovation is nonincreasing.  Addition is
continuous in the $M_1$ topology at a finite family of functions
having no common discontinuity times.  
The deterministic compensation can be identified explicitly.
Karamata's theorem and the normalization of $B_N$ give
\[
 \frac{N}{B_N}
 \E\left[
   \varepsilon_0
   \mathbf 1_{\{|\varepsilon_0|>\eta B_N\}}
 \right]
 \longrightarrow
 \int_{\{|x|>\eta\}}x\,\nu_\alpha(dx).
\]
Since the normalized coefficient mass of every interior cluster
converges to one, the centering term converges uniformly on compact
time intervals to
\[
 -t\int_{\{|x|>\eta\}}x\,\nu_\alpha(dx).
\]
This is precisely the compensation term in
$Z_\alpha^{(>\eta)}$. Thus,
\begin{align}\label{eq:3.12}
 \frac{S_N^{(>\eta)}}{B_NQ_N}
 \Rightarrow_{M_1}
 Z_\alpha^{(>\eta)}
 \quad\text{in }D[0,T],
\end{align}
apart from the contribution of innovations occurring before time
zero, where $Z_\alpha^{(>\eta)}$ denotes the compensated
compound-Poisson process containing the jumps $|x|>\eta$.

We now show that the pre-zero contribution vanishes.  If the
innovation is at time $-m$, its maximal coefficient on $[0,T]$ is
\[
 D_N(m,T)
 =
 \sum_{r=m+1}^{m+\lfloor NT\rfloor}b_{N,r}.
\]
Choose $p$ such that
\[
 1<p<\alpha.
\]
Karamata's theorem gives, for each fixed $\eta>0$,
\[
 \frac{
  \E|\varepsilon_{N,0}^{(>\eta)}|^p
 }{B_N^p}
 \leq
 \frac{C_{\eta,p}}N.
\]
Applying \eqref{eq:3.11} and Lemma \ref{lem:remotepast}, we obtain
\begin{align*}
 \E\sup_{0\leq t\leq T}
 \left|
  \frac1{B_NQ_N}
  \sum_{m\geq0}
  \left(
   \sum_{n=1}^{\lfloor Nt\rfloor}b_{N,n+m}
  \right)
  \varepsilon_{N,-m}^{(>\eta)}
 \right|^p                                                    
\leq
 \frac{C_{\eta,p}}N
 \sum_{m=0}^\infty
 \left(\frac{D_N(m,T)}{Q_N}\right)^p
 \longrightarrow0.
\end{align*}
This completes the proof of \eqref{eq:3.12}.

\subsection{Negligibility of small innovations}

Define
\[
 S_N^{(\leq\eta)}(t)
 :=
 \sum_{n=1}^{\lfloor Nt\rfloor}
 \sum_{r\geq1}
 b_{N,r}\varepsilon_{N,n-r}^{(\leq\eta)}.
\]
We split this process into the coefficient ranges
\[
 1\leq r\leq m_N,
 \qquad
 m_N<r\leq2NT,
 \qquad
 r>2NT,
\]
with the corresponding processes denoted as $V_{N,\eta}^{(i)}(t), i=1,2,3$, respectively.

For the first range, define the two-sided partial-sum process
\[
 U_{N,\eta}(t)
 :=
 \frac1{B_N}
 \sum_{j=1}^{\lfloor Nt\rfloor}
 \varepsilon_{N,j}^{(\leq\eta)},
\]
with the usual reversed-sum convention for $t<0$.  At grid points,
the short-filter contribution has the exact representation
\[
 V_{N,\eta}^{(1)}(t)
 =
 \sum_{r=1}^{m_N}\frac{b_{N,r}}{Q_N}
 \left\{
  U_{N,\eta}(t-r/N)-U_{N,\eta}(-r/N)
 \right\}.
\]
Consequently,
\[
 \|V_{N,\eta}^{(1)}\|_T
 \leq
 2\left(
   \frac1{Q_N}\sum_{r=1}^{m_N}b_{N,r}
  \right)
 \max_{|k|\leq N(T+1)}
 \left|
  \frac1{B_N}
  \sum_{j=1}^k
  \varepsilon_{N,j}^{(\leq\eta)}
 \right|,
\]
where
\[
 \|x\|_T:=\sup_{0\leq t\leq T}|x(t)|.
\]
Doob's inequality gives
\begin{align*}
 \E
 \max_{|k|\leq N(T+1)}
 \left|
  \frac1{B_N}
  \sum_{j=1}^k
  \varepsilon_{N,j}^{(\leq\eta)}
 \right|^2                                                    
\leq
 C_T\frac N{B_N^2}
 \E\left[
  \varepsilon_0^2
  \mathbf 1_{\{|\varepsilon_0|\leq\eta B_N\}}
 \right].
\end{align*}
By Karamata's theorem,
\[
 \limsup_{N\to\infty}
 \frac N{B_N^2}
 \E\left[
  \varepsilon_0^2
  \mathbf 1_{\{|\varepsilon_0|\leq\eta B_N\}}
 \right]
 \leq C_\alpha\eta^{2-\alpha}.
\]
Since the short-filter weights have total mass converging to one,
\begin{align}\label{eq:3.13}
 \lim_{\eta\downarrow0}\limsup_{N\to\infty}
 \Pp\left(
  \|V_{N,\eta}^{(1)}\|_T>\epsilon
 \right)
 =0.
\end{align}

For the intermediate range, put
\[
 \beta_N(T)
 :=
 \frac1{Q_N}
 \sum_{m_N<r\leq2NT}b_{N,r}.
\]
Lemma \ref{lem:compressedcluster} gives $\beta_N(T)\to0$.  Expressing each shifted innovation
sum as the difference of two partial sums gives
\[
 \|V_{N,\eta}^{(2)}\|_T
 \leq
 2\beta_N(T)
 \max_{|k|\leq2NT}
 \left|
  \frac1{B_N}
  \sum_{j=1}^k
  \varepsilon_{N,j}^{(\leq\eta)}
 \right|.
\]
For every fixed $\eta>0$, the maximum on the right is
$O_{\Pp}(1)$.  Therefore
\begin{align}\label{eq:3.14}
 \|V_{N,\eta}^{(2)}\|_T
 \xrightarrow{\Pp}0
 \qquad
 \text{for every fixed }\eta>0.
\end{align}

We finally consider the remote-past range.  Here it is essential to
choose $ \alpha<p<2,$ Since $p>\alpha$, Karamata's theorem gives
\[
 \E\left[
  |\varepsilon_0|^p
  \mathbf 1_{\{|\varepsilon_0|\leq\eta B_N\}}
 \right]
 \leq
 C_p(\eta B_N)^{p-\alpha}h(\eta B_N).
\]
For fixed $\eta>0$, slow variation and the normalization of $B_N$
therefore imply
\[
 \frac{
  \E|\varepsilon_{N,0}^{(\leq\eta)}|^p
 }{B_N^p}
 \leq
 C_p\frac{\eta^{p-\alpha}}N
\]
for all sufficiently large $N$.  By \eqref{eq:3.11},
\begin{align*}
 \E\|V_{N,\eta}^{(3)}\|_T^p
 \leq
 C_p
 \frac{
  \E|\varepsilon_{N,0}^{(\leq\eta)}|^p
 }{B_N^p}
 \sum_{m=0}^\infty
 \left(\frac{D_N(m,T)}{Q_N}\right)^p                         
 \leq
 C_p\eta^{p-\alpha}
 \left\{
  \frac1N
  \sum_{m=0}^\infty
  \left(\frac{D_N(m,T)}{Q_N}\right)^p
 \right\}.
\end{align*}
Lemma \ref{lem:remotepast} applies because $p>1$, and hence
\begin{align}\label{eq:3.16}
 \|V_{N,\eta}^{(3)}\|_T
 \xrightarrow{\Pp}0
 \qquad
 \text{for every fixed }\eta>0.
\end{align}

Combining \eqref{eq:3.13}--\eqref{eq:3.16}, we obtain
\begin{align}\label{eq:3.17}
 \lim_{\eta\downarrow0}\limsup_{N\to\infty}
 \Pp\left(
  \left\|
   \frac{S_N^{(\leq\eta)}}{B_NQ_N}
  \right\|_T>\epsilon
 \right)
 =0.
\end{align}

It remains to remove the truncation from the limiting process.
The compensated small-jump part satisfies
\[
 \E\sup_{0\leq t\leq T}
 |Z_\alpha^{(\leq\eta)}(t)|^2
 \leq
 C_T\int_{|x|\leq\eta}x^2\nu_\alpha(dx)
 \leq
 C_T\eta^{2-\alpha}
 \longrightarrow0.
\]
Thus
\[
 Z_\alpha^{(>\eta)}
 \Rightarrow_{M_1}Z_\alpha
 \qquad\text{as }\eta\downarrow0.
\]
The converging-together theorem, \eqref{eq:3.12}, and \eqref{eq:3.17}
prove
\[
 \frac{S_N}{B_NQ_N}
 \Rightarrow_{M_1}Z_\alpha
 \quad\text{in }D[0,T],
\]
which is  \eqref{eq:firstM1}.

\subsection{The Gaussian functional limit}

Suppose now that $\alpha=2$,
$\E\varepsilon_0^2=\sigma_\varepsilon^2<\infty$, and
$\E|\varepsilon_0|^{2+\delta}<\infty$ for some $\delta>0$.  Let
\[
 U_N(t)
 :=
 \frac1{B_N}
 \sum_{j=1}^{\lfloor Nt\rfloor}\varepsilon_j
\]
be polygonally interpolated and defined two-sidedly when needed.
Since
\[
 B_N^2\sim N\sigma_\varepsilon^2,
\]
Donsker's theorem gives
\[
 U_N\Rightarrow Z_2
 \quad\text{in }C[-2T,T].
\]
In particular, $\{U_N\}$ is asymptotically uniformly equicontinuous.

The contribution of $1\leq r\leq m_N$ is
\[
 V_N^{(1)}(t)
 =
 \sum_{r=1}^{m_N}\frac{b_{N,r}}{Q_N}
 \{U_N(t-r/N)-U_N(-r/N)\}
 +o_{\Pp}(1),
\]
uniformly in $t\in[0,T]$.  Therefore
\begin{align*}
 \|V_N^{(1)}-U_N\|_T
 &\leq
 \left|
  \frac1{Q_N}\sum_{r=1}^{m_N}b_{N,r}-1
 \right|\|U_N\|_T                                            \\
 &\quad+
 \sup_{\substack{|u-v|\leq m_N/N\\u,v\in[-1,T]}}
 |U_N(u)-U_N(v)|
 +
 \sup_{|u|\leq m_N/N}|U_N(u)|
 +
 o_{\Pp}(1).
\end{align*}
All terms on the right converge to zero in probability, because
$m_N/N\to0$, the short-filter weights have total mass tending to one,
and $U_N$ is asymptotically uniformly equicontinuous.  Hence
\[
 \|V_N^{(1)}-U_N\|_T\xrightarrow{\Pp}0.
\]

For the intermediate range,
\[
 \|V_N^{(2)}\|_T
 \leq
 2\beta_N(T)
 \sup_{u\in[-2T,T]}|U_N(u)|
 =
 o_{\Pp}(1).
\]
For the remote-past range, apply \eqref{eq:3.11} with $p=2$.  This
gives
\begin{align*}
 \E\|V_N^{(3)}\|_T^2
 \leq
 C\frac{\sigma_\varepsilon^2}{B_N^2}
 \sum_{m=0}^\infty
 \left(\frac{D_N(m,T)}{Q_N}\right)^2                         
 \leq
 C
 \left\{
  \frac1N
  \sum_{m=0}^\infty
  \left(\frac{D_N(m,T)}{Q_N}\right)^2
 \right\}
 \longrightarrow0
\end{align*}
by Lemma \ref{lem:remotepast} with $p=2$.  Consequently,
\[
 \left\|
  \frac{S_N}{B_NQ_N}-U_N
 \right\|_T
 \xrightarrow{\Pp}0.
\]

It remains to pass from polygonal interpolation to the original step
process.  Since $b_{N,j}\leq a_j$ and
\[
 \sum_{j\geq1}a_j^2<\infty,
 \qquad
 \sum_{j\geq1}a_j^{2+\delta}<\infty,
\]
Rosenthal's inequality gives
\[
 \sup_N\E|X_{N,0}|^{2+\delta}<\infty.
\]
Therefore,
\begin{align*}
 \Pp\left(
  \max_{1\leq n\leq NT}
  \frac{|S_N(n/N)-S_N((n-1)/N)|}{B_NQ_N}
  >\epsilon
 \right)                                                     
\leq
 \frac{CN}{\epsilon^{2+\delta}(B_NQ_N)^{2+\delta}}
 \longrightarrow0,
\end{align*}
because $B_N^2\sim N\sigma_\varepsilon^2$ and $Q_N\to\infty$.
Hence the maximal distance between the step process and its polygonal
interpolation converges to zero in probability.

Since $U_N\Rightarrow Z_2$ in $C[0,T]$, Slutsky's theorem proves
\[
 \frac{S_N}{B_NQ_N}
 \Rightarrow_{J_1}Z_2
 \quad\text{in }D[0,T].
\]
This proves \eqref{eq:firstJ1}.

\subsection{Proof of the scale theorem and 
\texorpdfstring{$J_1$}{J1} cannot be expected}

\Cref{thm:scale} is exactly Lemma \ref{lem:temperedcumulative} with $x=c$.
To see the last assertion in \cref{rem:J1failure}, note that
\[
 \max_{j\ge1}b_{N,j}\le N^{o(1)}
 \quad\text{and}\quad Q_N\to\infty,
\]
while a sharper Potter estimate gives
$\max_jb_{N,j}/Q_N\to0$.  Conditional on an innovation of size
comparable with $B_N$, no single jump of $S_N/(B_NQ_N)$ carries a
nonvanishing fraction of the limiting jump, but the complete
monotone cluster does.  The oscillation criterion for $J_1$ at a
jump time is therefore violated with a probability bounded away from
zero.  This is the triangular-filter analogue of
\citet[Theorem 1]{AvramTaqqu1992}.

\section{Proof of the second-order limit theorem}
\label{sec:secondproof}

\subsection{Exact linear representation}

Let
\[
 E_N(t):=\sum_{j=1}^{\floor{Nt}}\varepsilon_j.
\]
Interchanging the two sums in \eqref{eq:SN} gives
\[
 S_N(t)
 =\sum_{i\in\Z}
 \left\{
 \sum_{n=1}^{\floor{Nt}}
 b_{N,n-i}\1_{\{n-i\ge1\}}
 \right\}\varepsilon_i.
\]
Consequently, the process in \eqref{eq:RN} has the exact
representation
\begin{equation}
 R_N(t)=\frac1{B_N}\sum_{i\in\Z}k_{N,t}(i)\varepsilon_i,
 \label{eq:RNlinear}
\end{equation}
where $k_{N,t}$ is given by \eqref{eq:discretekernel}.  This
representation explains why the second-order scale is
$B_N\ell(N)$: changes of a critical cumulative sum across a
proportional interval have size $\ell(N)$, by
\eqref{eq:deHaan}.

\subsection{Finite-dimensional convergence}

Fix $t_1,\ldots,t_m\ge0$ and $\theta_1,\ldots,\theta_m\in\R$.
Set
\[
 k_N(i):=\sum_{r=1}^m\theta_rk_{N,t_r}(i),
 \qquad
 k(y):=\sum_{r=1}^m\theta_rK_\lambda(t_r,y).
\]
Choose $\eta\in(0,\alpha-1)$. By Lemma \ref{lem:kernelconv}, applied to the finite
linear combination with
\[
 q=\alpha-\eta,\qquad q=\alpha,\qquad q=\alpha+\eta,
\]
we have
\[
 \int_{\mathbb R}
 |k_N(\floor{Ny})-k(y)|^q\,dy\longrightarrow0
\]
for each of these three values of $q$. Consequently,
\[
 \sup_N\int_{\mathbb R}
 \left\{
 |k_N(\floor{Ny})|^{\alpha-\eta}
 +
 |k_N(\floor{Ny})|^{\alpha+\eta}
 \right\}\,dy<\infty.
\]
Furthermore, by   \eqref{eq:kernelmax},
\[
 \frac{\max_i|k_N(i)|}{B_N}
 \le
 \frac{N^{o(1)}}{N^{1/\alpha+o(1)}}
 \longrightarrow0.
\]
Thus all the assumptions of Lemma \ref{lem:weightedarray} are satisfied.
Applying Lemma \ref{lem:weightedarray} to
\eqref{eq:RNlinear} yields
\begin{align*}
 \sum_{r=1}^m\theta_rR_N(t_r)
 &=
 \frac1{B_N}\sum_{i\in\Z}k_N(i)\varepsilon_i\\
 &\xrightarrow{d}
 \int_\R k(y)M_\alpha(\dd y)
 =\sum_{r=1}^m
 \theta_r\mathcal L_{\alpha,\lambda}(t_r).
\end{align*}
The Cram\'er--Wold device proves \eqref{eq:secondfdd}.

\subsection{Continuity of the Gaussian limit}

Let $\alpha=2$.  By the isometry of Gaussian stochastic integrals,
\[
 \E\left|
 \mathcal L_{2,\lambda}(t)
 -\mathcal L_{2,\lambda}(s)
 \right|^2
 =
 C\int_\R
 |K_\lambda(t,y)-K_\lambda(s,y)|^2\dd y.
\]
Put $h=|t-s|\le1/2$.  On the two intervals within distance $2h$
of the moving singularities, the integral is bounded by
\[
 C\int_0^{3h}(1+|\log u|)^2\dd u
 \le Ch(1+|\log h|^2).
\]
Outside these intervals, the mean value theorem and
\[
 G_\lambda'(x)=\frac{\e^{-\lambda x}}{x}
\]
give the same bound.  Hence
\[
 \E\left|
 \mathcal L_{2,\lambda}(t)
 -\mathcal L_{2,\lambda}(s)
 \right|^2
 \le Ch(1+|\log h|^2).
\]
Gaussian hypercontractivity raises this estimate to every moment
$p>2$.  Kolmogorov's criterion then provides a continuous
modification.

\subsection{Gaussian tightness and functional convergence}
The finite-dimensional convergence in \eqref{eq:secondfdd} has already
been proved for $\alpha=2$.
Choose $2<p\le2+\delta$ and
\[
 0<\vartheta<\frac12-\frac1p.
\]
Set
\[
 \gamma:=p(1/2-\vartheta)>1.
\]
By Lemma \ref{lem:gaussiantight},
\[
 \mathbb E|\widetilde R_N(t)-\widetilde R_N(s)|^p
 \le
 C\bigl(|t-s|+N^{-1}\bigr)^\gamma.
\]
In particular, at grid points, or whenever $|t-s|\ge N^{-1}$,
\[
 \mathbb E|\widetilde R_N(t)-\widetilde R_N(s)|^p
 \le C|t-s|^\gamma.
\]
Since $\gamma>1$, the discrete Kolmogorov tightness criterion,
together with the vanishing maximal interpolation error established
in Lemma \ref{lem:gaussiantight}, implies that $\{\widetilde R_N\}$ is tight in
$C[0,T]$. Every subsequential limit
has the finite-dimensional distributions of
$\mathcal L_{2,\lambda}$, whose continuous modification is unique in
law.  This proves \eqref{eq:secondC}.

\subsection{Critical degeneration of TFSM II}
We now justify the derivative interpretation in Remark \ref{rem:derivative} at the
level of stable stochastic integrals.  Pointwise differentiation of
the deterministic kernel alone is not sufficient; one must prove
convergence of the difference quotient in $L^\alpha(\mathbb R)$.

Fix $t\geq0$ and $\lambda\geq0$.  All derivatives with respect to
$d$ below are understood as limits in probability of the corresponding
difference quotients of stable stochastic integrals.
Put
\[
 d=H-\frac1\alpha
\]
and write the TFSM II kernel as
\begin{align*}
 h_{d,\lambda}(t;y)
 =
 (t-y)_+^d e^{-\lambda(t-y)_+}
 -
 (-y)_+^d e^{-\lambda(-y)_+}                               
+
 \lambda\int_0^t
 (s-y)_+^d e^{-\lambda(s-y)_+}\,ds.
\end{align*}
Here $x_+^0=\mathbf 1_{\{x>0\}}$.  For $x>0$, define
\[
 H_{d,\lambda}(x)
 :=
 x^d e^{-\lambda x}
 +
 \lambda\int_0^x u^d e^{-\lambda u}\,du.
\]
A change of variables in the integral gives the exact representation
\begin{align}
 h_{d,\lambda}(t;y)
 =
 H_{d,\lambda}(t-y)\mathbf 1_{\{y<t\}}
 -
 H_{d,\lambda}(-y)\mathbf 1_{\{y<0\}}.
 \label{eq:4.4}
\end{align}
At $d=0$,
\[
 H_{0,\lambda}(x)
 =
 e^{-\lambda x}
 +
 \lambda\int_0^xe^{-\lambda u}\,du
 =1.
\]
Consequently,
\[
 h_{0,\lambda}(t;y)
 =
 \mathbf 1_{\{0<y<t\}},
\]
and hence
\[
 Z_{1/\alpha,\alpha,\lambda}^{\mathrm{II}}(t)
 =
 Z_\alpha(t)
\]
for every $\lambda\geq0$.

For $d\neq0$, put
\[
 J_{d,\lambda}(x)
 :=
 \frac{H_{d,\lambda}(x)-H_{0,\lambda}(x)}d.
\]
Using $H_{0,\lambda}(x)=1$, we have
\begin{align}
 J_{d,\lambda}(x)
 =
 e^{-\lambda x}\frac{x^d-1}{d}                              
 +
 \lambda\int_0^xe^{-\lambda u}
 \frac{u^d-1}{d}\,du.
 \label{eq:4.5}
\end{align}
For every fixed $x>0$,
\begin{align*}
 J_{d,\lambda}(x)
 \longrightarrow
 e^{-\lambda x}\log x
 +
 \lambda\int_0^xe^{-\lambda u}\log u\,du                   
 =:
 \widetilde G_\lambda(x).
\end{align*}
Integration by parts gives
\begin{align*}
 \widetilde G_\lambda(x)
 =
 \log x+
 \int_0^x\frac{e^{-\lambda u}-1}{u}\,du                    
 =G_\lambda(x).
\end{align*}
It follows pointwise, for $y\neq0,t$, that
\[
 \frac{
  h_{d,\lambda}(t;y)-h_{0,\lambda}(t;y)
 }d
 \longrightarrow
 K_\lambda(t,y),
\]
where
\[
 K_\lambda(t,y)
 =
 G_\lambda(t-y)\mathbf 1_{\{y<t\}}
 -
 G_\lambda(-y)\mathbf 1_{\{y<0\}}.
\]

We next strengthen the pointwise convergence to convergence in
$L^\alpha(\mathbb R)$.  Choose
\[
 0<\delta<
 \min\left\{
  \frac1\alpha,\,
  1-\frac1\alpha
 \right\}.
\]
For $|d|\leq\delta$, the mean-value theorem gives
\[
 \left|\frac{x^d-1}{d}\right|
 \leq
 |\log x|\left(x^\delta+x^{-\delta}\right),
 \qquad x>0.
\]
It follows from \eqref{eq:4.5} that, for $0<x\leq T+1$,
\begin{align}
 |J_{d,\lambda}(x)|
 \leq
 C_{\delta,T}
 x^{-\delta}(1+|\log x|).
 \label{eq:4.6}
\end{align}
Since $\alpha\delta<1$,
\[
 \int_0^1
 x^{-\alpha\delta}(1+|\log x|)^\alpha\,dx
 <\infty.
\]
Thus  \eqref{eq:4.6} controls uniformly the two logarithmic
singularities at $y=0$ and $y=t$.

The far-past region requires the cancellation between the two terms
in \eqref{eq:4.4}.  Differentiating $H_{d,\lambda}$ with respect to
$x$ gives
\begin{align*}
 H_{d,\lambda}'(x)
 =
 d x^{d-1}e^{-\lambda x}
 -
 \lambda x^de^{-\lambda x}
 +
 \lambda x^de^{-\lambda x}                                 
=
 d x^{d-1}e^{-\lambda x}.
\end{align*}
Therefore,
\begin{align}
 J_{d,\lambda}'(x)
 =
 x^{d-1}e^{-\lambda x}.
 \label{eq:4.7}
\end{align}
Let $y=-z<0$ with $z\geq1$.  By \eqref{eq:4.4} and \eqref{eq:4.7},
\begin{align}
 \left|
  \frac{
   h_{d,\lambda}(t;-z)-h_{0,\lambda}(t;-z)
  }d
 \right|                                                     
=
 |J_{d,\lambda}(z+t)-J_{d,\lambda}(z)|                      
 =
 \left|
  \int_z^{z+t}u^{d-1}e^{-\lambda u}\,du
 \right|                                                     
\leq
 C_Tz^{-1+\delta}e^{-\lambda z}.
 \label{eq:4.8}
\end{align}
When $\lambda=0$, the exponential factor in the last expression is
simply omitted.  Since
\[
 \alpha(1-\delta)>1,
\]
the right-hand side of \eqref{eq:4.8}, raised to the power
$\alpha$, is integrable on $[1,\infty)$.

The estimates \eqref{eq:4.6} and \eqref{eq:4.8} provide an
$L^\alpha(\mathbb R)$ majorant independent of $d$.  Dominated
convergence therefore yields
\begin{align}
 \left\|
  \frac{
   h_{d,\lambda}(t;\cdot)-h_{0,\lambda}(t;\cdot)
  }d
  -
  K_\lambda(t,\cdot)
 \right\|_{L^\alpha(\mathbb R)}
 \longrightarrow0.
 \label{eq:4.9}
\end{align}

Stable stochastic integration is continuous with respect to the
$L^\alpha$ norm.  Hence \eqref{eq:4.9} implies
\[
 \frac{
  Z_{d+1/\alpha,\alpha,\lambda}^{\mathrm{II}}(t)
  -
  Z_{1/\alpha,\alpha,\lambda}^{\mathrm{II}}(t)
 }d
 \xrightarrow{\Pp}
 \int_{\mathbb R}K_\lambda(t,y)\,M_\alpha(dy)
 =
 L_{\alpha,\lambda}(t).
\]
Thus the derivative may be moved through the stable integral only
after the $L^\alpha$ convergence in \eqref{eq:4.9} has been
established.

Finally, define
\[
 Y_d(t)
 :=
 \Gamma(d+1)^{-1}
 Z_{d+1/\alpha,\alpha,\lambda}^{\mathrm{II}}(t).
\]
Since
\[
 \Gamma(d+1)^{-1}
 =
 1+\gamma_{\mathrm E}d+o(d),
 \qquad d\to0,
\]
we have
\begin{align*}
 \left.\frac{\partial}{\partial d}Y_d(t)\right|_{d=0}
 &=
 L_{\alpha,\lambda}(t)
 +
 \gamma_{\mathrm E}Z_\alpha(t).
\end{align*}
Equivalently,
\[
 L_{\alpha,\lambda}(t)
 =
 \left.
 \frac{\partial}{\partial d}
 \left\{
  \Gamma(d+1)^{-1}
  Z_{d+1/\alpha,\alpha,\lambda}^{\mathrm{II}}(t)
 \right\}
 \right|_{d=0}
 -
 \gamma_{\mathrm E}Z_\alpha(t).
\]
This proves rigorously the identity stated in Remark \ref{rem:derivative}.

\subsection{Comments on extensions}

The exact identity $a_j=\ell(j)/j$ can be relaxed in the first-order
theorem to
\[
 a_j\sim\frac{\ell(j)}j,\qquad a_j\ge0
\quad\text{eventually}.
\]
For the second-order theorem, an asymptotic equivalent alone is not
sufficient because the remainder can be of the same order as
$\ell(N)$.  A convenient sufficient condition is
\[
 \sum_{j=1}^N
 \left(a_j-\frac{\ell(j)}j\right)=o(\ell(N)).
\]
Signed filters can also be considered, but positivity is then lost
and $M_1$ addition is no longer continuous at a cluster containing
opposite-sign steps.  Finite-dimensional convergence remains
accessible, while the $S$ or $M_2$ topology may be more appropriate
for functional convergence.

Finally, the strong-tempering regime has a different second-order
scale governed by the Laplace transform near $1/\lambda_N$.  Its
precise form depends on the de Haan remainder of
$L(1/\lambda_N)$ and is not universal under first-order slow
variation alone.  The weak and moderate theorem above is universal
because proportional increments of $L$ always satisfy
\eqref{eq:deHaan}.


\small
\begin{thebibliography}{99}

\bibitem[Astrauskas(1983)]{Astrauskas1983}
A.~Astrauskas.
\newblock Limit theorems for sums of linearly generated random variables.
\newblock \emph{Lithuanian Mathematical Journal}, 23:127--134, 1983.

\bibitem[Avram and Taqqu(1992)]{AvramTaqqu1992}
F.~Avram and M.~S.~Taqqu.
\newblock Weak convergence of sums of moving averages in the
  $\alpha$-stable domain of attraction.
\newblock \emph{The Annals of Probability}, 20(1):483--503, 1992.


\bibitem[Balan et~al.(2016)Balan, Jakubowski, and
  Louhichi]{BalanJakubowskiLouhichi2016}
R.~M.~Balan, A.~Jakubowski, and S.~Louhichi.
\newblock Functional convergence of linear processes with heavy-tailed
  innovations.
\newblock \emph{Journal of Theoretical Probability}, 29:491--526, 2016.



\bibitem[Bingham et~al.(1987)Bingham, Goldie, and
  Teugels]{BinghamGoldieTeugels1987}
N.~H.~Bingham, C.~M.~Goldie, and J.~L.~Teugels.
\newblock \emph{Regular Variation}.
\newblock Cambridge University Press, Cambridge, 1987.



\bibitem{Christofides2000}
T.~C. Christofides,
Maximal inequalities for demimartingales and a strong law of large numbers,
\emph{Statistics \& Probability Letters} \textbf{50} (2000), 357--363.

\bibitem[Davis and Resnick(1985)]{DavisResnick1985}
R.~A.~Davis and S.~I.~Resnick.
\newblock Limit theory for moving averages of random variables with regularly
  varying tail probabilities.
\newblock \emph{The Annals of Probability}, 13(1):179--195, 1985.


\bibitem[Giraitis et~al.(2012)Giraitis, Koul, and
  Surgailis]{GiraitisKoulSurgailis2012}
L.~Giraitis, H.~L.~Koul, and D.~Surgailis.
\newblock \emph{Large Sample Inference for Long Memory Processes}.
\newblock Imperial College Press, London, 2012.


\bibitem[Kasahara and Maejima(1988)]{KasaharaMaejima1988}
Y.~Kasahara and M.~Maejima.
\newblock Weighted sums of i.i.d.\ random variables attracted to integrals of
  stable processes.
\newblock \emph{Probability Theory and Related Fields}, 78:75--96, 1988.



\bibitem[Meerschaert and Sabzikar(2013)]{MeerschaertSabzikar2013}
M.~M.~Meerschaert and F.~Sabzikar.
\newblock Tempered fractional Brownian motion.
\newblock \emph{Statistics \& Probability Letters}, 83(10):2269--2275, 2013.


\bibitem[Meerschaert and Sabzikar(2016)]{MeerschaertSabzikar2016}
M.~M.~Meerschaert and F.~Sabzikar.
\newblock Tempered fractional stable motion.
\newblock \emph{Journal of Theoretical Probability}, 29(2):681--706, 2016.



\bibitem[Resnick(2007)]{Resnick2007}
S.~I.~Resnick.
\newblock \emph{Heavy-Tail Phenomena: Probabilistic and Statistical
  Modeling}.
\newblock Springer, New York, 2007.

\bibitem[Sabzikar and Surgailis(2018a)]{SabzikarSurgailis2018a}
F.~Sabzikar and D.~Surgailis.
\newblock Invariance principles for tempered fractionally integrated
  processes.
\newblock \emph{Stochastic Processes and their Applications},
  128(10):3419--3438, 2018.


\bibitem[Sabzikar and Surgailis(2018b)]{SabzikarSurgailis2018b}
F.~Sabzikar and D.~Surgailis.
\newblock Tempered fractional Brownian and stable motions of second kind.
\newblock \emph{Statistics \& Probability Letters}, 132:17--27, 2018.


\bibitem[Samorodnitsky and Taqqu(1994)]{SamorodnitskyTaqqu1994}
G.~Samorodnitsky and M.~S.~Taqqu.
\newblock \emph{Stable Non-Gaussian Random Processes: Stochastic Models
  with Infinite Variance}.
\newblock Chapman \& Hall, New York, 1994.


\bibitem[Xu(2025)]{Xu2025}
F.~Xu.
\newblock A limit theorem for some linear processes with innovations in the
  domain of attraction of a stable law.
\newblock \emph{Communications in Mathematics and Statistics}, 2025, 1--12.
https://doi.org/10.1007/s40304-024-00440-3.


\end{thebibliography}
\end{document}